\newcommand*\Laplace{\mathop{}\!\mathbin\bigtriangleup}
  \tikzstyle{abstractbox} = [draw=black, fill=white, rectangle, 
  \tikzstyle{abstracttitle} =[fill=white]
  \newcommand{\boxabstract}[2][fill=white]{
    \begin{center}
      \begin{tikzpicture}
        \node [abstractbox, #1] (box)                                          
        {\begin{minipage}{0.80\linewidth}
            \footnotesize #2
          \end{minipage}};
        \node[abstracttitle, right=10pt] at (box.north west) {Abstract};
      \end{tikzpicture}
    \end{center}
  }
\numberwithin{equation}{section}
\theoremstyle{definition}
\newtheorem{rem}{Remark}[section]
\newtheorem{lem}{Lemma}[section]
\newtheorem{thm}{Theorem}[section]
\newtheorem{prop}{Proposition}[section]
\def\R{\mathbb{R}}
\def\P{\mathbb{P}}
\def\N{\mathbb{N}}
\def\H{\mathfrak{H}}
 \def\E{\mathbb{E}}
\def\to{\rightarrow}
\def\H{\mathfrak{H}}
\def\Cov{\text{Cov}}
\def\Var{\text{Var}}
\begin{document}

\arraycolsep=1pt
\allowdisplaybreaks

\title{\bf\Large
A Peccati-Tudor type theorem for Rademacher chaoses   
\footnotetext {\hspace{-0.35cm}
2010 {\it Mathematics Subject Classification}. Primary: 60F05, 60B12;
Secondary: 47N30.
\endgraf {\it Key words and phrases}.
Fourth moment theorem; Rademacher chaos; Stein's method; exchangeable pairs;  spectral decomposition; maximal influence.
}}
\author{Guangqu Zheng\footnote{Email: guangqu.zheng@uni.lu}}
\date{\it\small 6 Avenue de la fonte, Maison du Nombre, \\
Universit\'e du Luxembourg, Esch-sur-Alzette, L4364, Luxembourg}
 \maketitle

\vspace{-0.6cm}

\boxabstract{In this article, we prove that in the Rademacher setting, a random vector with chaotic components is close in distribution to a centred Gaussian vector, if both the {\it maximal influence}  of the associated kernel and the fourth cumulant of each component is small. In particular, we recover the univariate case recently established in D\"obler and Krokowski (2017).  

\qquad Our main strategy consists in  a novel adaption  of the  exchangeable pairs couplings initiated in Nourdin and Zheng (2017), as well as its combination with   estimates via chaos decomposition.
    }


\section{Introduction}  

\subsection{Motivation}

Nualart and Peccati's  fourth moment theorem  states that a normalised sequence of fixed-order multiple Wiener-It\^o integrals associated to a Brownian motion converges in law to the standard Gaussian if and only if the corresponding fourth moment converges to $3$.  It was proved in \cite{FMT} using the Dambis-Dubins-Schwartz  random-time change technique.   Soon after the appearance of \cite{FMT}, several extensions have been made, among which the paper  \cite{PTudor}  by Peccati and Tudor provided a significant multivariate extension using the same techique.  Roughly speaking, a sequence of chaotic random vectors on the Wiener space converges in distribution to a centred Gaussian vector with matched covariance matrix if and only if the asymptotic normality holds true for each component.  Note that the necessary condition boils down to the convergence of the fourth moments due to the fourth moment theorem of Nualart and Peccati.

In 2009,  Nourdin and Peccati \cite{NP08}    combined the Malliavin calculus and Stein's method of normal approximation so as to literally create a new field of research, known as the {\it Malliavin-Stein approach}.  One of its many highlights is the obtention of the (quantitative) fourth moment theorem in the total-variation distance. Here is the  bound quoted from the monograph \cite{bluebook}:  given a normalised $q$-th Wiener-It\^o integral $F$ associated to a   Brownian motion, one has 
\begin{align*} 
 d_{\rm TV}(F, Z) := \sup_{A\in\mathscr{B}(\R)} \Big\vert \P\big( F\in A \big) - \P\big( Z\in A \big) \Big\vert  \leq  \frac{2}{\sqrt{3}} \sqrt{  \frac{q-1}{q} \big( \E[ F^4] - 3 \big) } \,\,, 
\end{align*}
where $Z$ is a standard Gaussian random variable and $\mathscr{B}(\R)$ denotes the Borel $\sigma$-algebra on $\R$.  As an immediate consequence, the fourth moment theorem of Nualart and Peccati follows.

The success of the Malliavin-Stein approach stems from the integration by parts on both sides, namely, the Stein's lemma within the Stein's method and the duality relation between Malliavin derivative and Skorohod divergence on a Gaussian space, see the monograph \cite{bluebook} for a comprehensive treatment.  The only ingredients required from the Stein's method are the    Stein's lemma, Stein's equation and the regularity properties of the Stein's solution, while ``exchangeable pairs",  another fundamental tool and notable cornerstone of Stein's method, had not been touched until the recent investigation \cite{NZ17} made by Nourdin and Zheng. They constructed infinitely many exchangeable pairs of Brownian motions and combined them with E. Meckes' abstract results \cite{Meckes_diss, Meckes2009} on exchangeable pairs  to recover the quantitative fourth moment theorem  on a Gaussian space in any dimension. Such an elementary  strategy was soon adapted by D\"obler, Vidotto and Zheng in \cite{DVZ17} for their investigation on the Poisson space, and they were able to obtain  the quantitative fourth moment theorem  in any dimension.  In fact, the univariate fourth moment theorem on the Poisson space was established earlier in \cite{DP17} under some {\it integrability} assumptions involving the difference operator, which are partially due to the inherent discreteness of the Poisson space.   Remarkably, the authors of \cite{DVZ17} were able to obtain the exact fourth moment theorem under the {\it weakest possible} assumption of finite fourth moment. This illustrates the power of   the elementary exchangeable pairs approach. 

\bigskip

In this work, under {\it suitable} assumptions,   we establish a Peccati-Tudor type theorem in the Rademacher setting using the elementary exchangeable pairs approach.

\subsection{Main result}
  
  We first fix a rich probability space $\big(\Omega, \mathcal{F}, \P \big)$, on which our random objects are defined. Let $\E$ be the associated expectation operator. 
  
We write $\N : = \{1, 2, \ldots \}$ and denote by $\mathbf{X}$ a sequence of independent Rademacher random variables $(X_k,k\in\N)$ such that $\P\big( X_k = 1\big) = p_k = 1 - q_k = 1 - \P\big( X_k = - 1 \big)\in(0,1)$.   We call it   the {\it symmetric} case, whenever $p_k  = 1/2$ for each $k\in\N$;  otherwise, we call it the {\it general} case.  We write $\mathbf{Y} = \big( Y_k, k\in\N \big)$ for the {\it normalised version} of $\mathbf{X}$, that is, 
\begin{align} \label{N-Version}
Y_k = \frac{X_k - p_k + q_k}{2\sqrt{p_kq_k}} \,\,,\quad k\in\N \, .
\end{align}
We write  $\H = \ell^2(\N)$, equipped with usual $\ell^2$-norm and   for $p\in\N$, $\H^{\otimes p}$ means the $p$-th tensor product of $\H$ and $\H^{\odot p}$  its symmetric subspace. We denote $\H^{\odot p}_0 : = \big\{ f\in \H^{\odot p}\,:\, f\vert_{\Laplace_p^c} = 0 \big\}$ with $\Laplace_p = \big\{ (i_1, \ldots, i_p)\in\N^p: i_k \neq i_j$ for different $k,j \big\}$. Clearly, $\H^{\odot 0}_0 = \H^{\otimes 0} = \R$ and $\H^{\odot 1}_0 = \H$.

 Let $f\in\H^{\odot d}_0$ with $d\in\N$ and $\Xi = (\xi_k, k\in\N)$ be a generic sequence of independent normalised random variables. We define the following   homogeneous sum with order $d$, based on the kernel $f$,  by setting,
\begin{align}\label{Q_d}
 Q_d(f; \Xi) : = \sum_{i_1, \ldots, i_d\in \N} f(i_1, \dots, i_d) \xi_{i_1}\cdots \xi_{i_d}
\end{align}
and in particular,  $Q_d(f; \mathbf{Y})$ is called the (discrete) multiple  integral of $f$.  We write  $\mathcal{C}_d = \big\{ Q_d(f; \mathbf{Y})\,: \, f\in\H^{\odot d}_0 \big\}$ and call it the $d$-th   Rademacher chaos, and as a convention, we put $\mathcal{C}_0 = \R$.    In case of no ambiguity, we will simple write $Q_d(f)$ for $Q_d(f; \mathbf{Y})$.

Let us introduce an important notion before we state our main result: for a given kernel $f\in\H^{\odot d}_0$, we denote by $\mathcal{M}(f)$ the {\it maximal influence} of $f$, namely
\begin{align}\label{m-inf}
\mathcal{M}(f) : = \sup_{k\in\N} \sum_{i_1, \ldots, i_{d-1}\in\N} f(i_1, \ldots, i_{d-1}, k)^2 \quad\text{for $d\geq 2$ \quad and}\quad \mathcal{M}(f) : = \sup_{k\in\N}  f( k)^2 \quad\text{for $d = 1$.}
\end{align}
This notion is adapted from the boolean analysis (see {\it e.g.} \cite{Boolean}), in which the class of low-influence functions is often what is interesting or necessary in practice. It is also closely related to the invariance principle established in \cite{MOO10} and the universality phenomenon  of Gaussian Wiener chaos \cite{NPR_aop}. See also Section 4 for more details.

\bigskip

In this work, we are mainly concerned with random variables in a Rademacher chaos and random vectors with components in Rademacher chaoses. More precisely,  we establish the following result.

  \begin{thm}\label{PT-Rad}  

Fix integers $d\geq 2$ and $1\leq q_1\leq \ldots \leq q_d$, and   consider  the sequence of random vectors
 $$
 F^{(n)} = (F^{(n)}_{1}, \ldots, F^{(n)}_d)^T :=  \big( Q_{q_1}(f_{1,n}),\ldots, Q_{q_d}(f_{d,n}) \big)^T
 $$ 
with kernels $f_{j,n}$ in $\H_0^{\odot q_j}$ for each $n\in\N$,$j\in\{1,\ldots,d\}$.  Assume that the covariance matrix $\Sigma_n$ of $ F^{(n)} $ converges in Hilbert-Schmidt norm to a nonnegative definite  symmetric matrix $\Sigma = \big( \Sigma_{i,j}, 1\leq i,j\leq d\big)$, as $n\to+\infty$.   Suppose that the following condition   holds:
 \begin{align*}
  \lim_{n\to+\infty} \sum_{j=1}^d   \mathcal{M}(f_{j,n})   = 0 \,\, . 
  \end{align*}
If for each $j\in\{1, \ldots, d\}$, $\E\Big[ \big(F^{(n)}_j\big)^4 \Big]$ converges to $3\Sigma_{j,j}^2$, as $n\to+\infty$, then  $ F^{(n)} $  converges in distribution to $Z\sim \mathcal{N}(0, \Sigma)$, as $n\to+\infty$. 
 \end{thm}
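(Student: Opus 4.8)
\medskip
\noindent\emph{Proof strategy.} The plan is to run the multivariate exchangeable-pairs version of Stein's method, in the spirit of \cite{NZ17} and \cite{DVZ17}, on the Rademacher chaos. Since convergence in distribution to $\mathcal{N}(0,\Sigma)$ follows once $\E[h(F^{(n)})]\to\E[h(Z)]$ for every $h\in C^3_b(\R^d)$, it suffices to produce an upper bound for a smooth-test-function distance between $F^{(n)}$ and $Z$ that tends to $0$. For each parameter $t>0$ I would construct an exchangeable pair $(\mathbf X,\mathbf X^t)$ by resampling the coordinates independently: set $X^t_k=X_k$ with probability $e^{-t}$ and let $X^t_k$ be a fresh independent copy of $X_k$ otherwise. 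Writing $F^{(n),t}_j:=Q_{q_j}(f_{j,n};\mathbf Y^t)$, the multilinearity of discrete multiple integrals (their kernels vanish on diagonals) together with $\E[Y^t_k\mid\mathbf X]=e^{-t}Y_k$ gives the exact identity $\E[F^{(n),t}-F^{(n)}\mid\mathbf X]=\mathrm{diag}(e^{-q_jt}-1)\,F^{(n)}$, so that after dividing by $t$ and letting $t\downarrow0$ the linear map appearing in E.~Meckes' abstract estimate \cite{Meckes_diss,Meckes2009} is $\Lambda=\mathrm{diag}(q_1,\dots,q_d)$.

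Next I would identify the infinitesimal conditional covariance of the increment. Writing $D_kF^{(n)}_j:=q_j\,Q_{q_j-1}\!\big(f_{j,n}(\cdot,k)\big)$ for the discrete gradient of $F^{(n)}_j$ along $Y_k$, and noting that only the $O(t)$-many resampled coordinates contribute at first order, one gets
\[
\Gamma^{(n)}_{ij}:=\lim_{t\downarrow 0}\tfrac1t\,\E\big[(F^{(n),t}_i-F^{(n)}_i)(F^{(n),t}_j-F^{(n)}_j)\mid\mathbf X\big]=\sum_{k\in\N}(1+Y_k^2)\,D_kF^{(n)}_i\,D_kF^{(n)}_j.
\]
Taking expectations, and using orthogonality of chaoses of distinct orders together with $\Sigma_n\to\Sigma$, one finds $\E[\Gamma^{(n)}_{ij}]\to(q_i+q_j)\Sigma_{ij}=(\Lambda\Sigma+\Sigma\Lambda)_{ij}$ (consistently, $\Sigma$ is block-diagonal along the groups of equal orders). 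Meckes' bound then controls the target distance by $\sum_{i,j}\sqrt{\Var(\Gamma^{(n)}_{ij})}$, the remaining contribution $\sum_{i,j}\big|\E[\Gamma^{(n)}_{ij}]-(\Lambda\Sigma+\Sigma\Lambda)_{ij}\big|$ being $o(1)$ by the convergence of the covariances, plus a third-order remainder that, after dividing by $t$, reduces to a constant multiple of $\sum_{i,j,l}\sum_{k\in\N}\E\big[\,|D_kF^{(n)}_i|\,|D_kF^{(n)}_j|\,|D_kF^{(n)}_l|\,\big]$.

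It then remains to show that both families of terms vanish. For the third-order remainder, I would group two of the three gradient factors and bound the sum using $\sum_k\|D_kF^{(n)}_j\|_{L^2}^2\lesssim\Sigma_{n,jj}$ together with $\sup_k\|D_kF^{(n)}_j\|_{L^2}^2\lesssim\mathcal M(f_{j,n})\to0$; the $L^4$-type estimates on the gradients needed along the way I would obtain not from hypercontractivity (whose constants degenerate when some $p_k$ tends to $0$ or $1$) but by re-applying the D\"obler--Krokowski fourth-moment identity on the Rademacher chaos to the gradient kernels $f_{j,n}(\cdot,k)$, whose contraction and maximal-influence norms are dominated by those of $f_{j,n}$. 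For the concentration terms, I would expand each product $D_kF^{(n)}_i\,D_kF^{(n)}_j$ via the product formula for discrete multiple integrals, so that $\Gamma^{(n)}_{ij}$ becomes a finite sum of discrete multiple integrals and $\Var(\Gamma^{(n)}_{ij})$ equals the sum of the squared norms of its symmetrized kernels of positive order; each such norm I would estimate, via Cauchy--Schwarz, in terms of the contraction norms $\|f_{i,n}\star^\ell_r f_{i,n}\|$, $\|f_{j,n}\star^\ell_r f_{j,n}\|$ and of maximal-influence quantities. The crucial point is that the Rademacher fourth-moment identity bounds $\sum_{r,\ell}\|f_{j,n}\star^\ell_r f_{j,n}\|^2$ by $\E\big[(F^{(n)}_j)^4\big]-3\Sigma_{n,jj}^2$ plus a multiple of $\mathcal M(f_{j,n})$, both of which vanish by hypothesis, while the genuinely Rademacher-specific contributions (the diagonal terms, present because $X_k^2=1$, and, in the general case, the fluctuation of $Y_k^2$ and the nonvanishing odd moments of the $Y_k$) are absorbed directly by $\mathcal M(f_{j,n})$. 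Feeding these estimates into Meckes' bound yields $\E[h(F^{(n)})]\to\E[h(Z)]$, and the theorem follows.

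I expect the variance estimate for $\Gamma^{(n)}_{ij}$ to be the main obstacle. The product formula on the Rademacher chaos is combinatorially considerably heavier than its Wiener-chaos counterpart, because of the diagonal corrections and, in the general case, the nonzero odd moments of the $Y_k$; one then has to verify that \emph{every} kernel produced by the expansion --- in particular all the mixed ones arising from two distinct components $i\neq j$ --- is controlled using only the two ``diagonal'' inputs, namely the fourth cumulant and the maximal influence, of components $i$ and $j$, via Cauchy--Schwarz. A closely related difficulty is organising all the $L^4$-type bounds so as to bypass hypercontractivity, which in the general Rademacher setting is unavailable with constants uniform in the $p_k$.
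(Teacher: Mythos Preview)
Your high-level architecture---the resampling exchangeable pair, $\Lambda=\mathrm{diag}(q_1,\dots,q_d)$ from Mehler's formula, and the Meckes-type bound of Proposition~\ref{Meckes09}---matches the paper exactly. The divergence is in the two analytic steps. For $\Var(\Gamma^{(n)}_{ij})$ you propose to write $\Gamma^{(n)}_{ij}=\sum_k(1+Y_k^2)\,D_kF^{(n)}_i\,D_kF^{(n)}_j$ and expand the summands via the Rademacher product formula; the paper instead works spectrally, in the spirit of \cite{Ledoux2010,CNPP16}. Since $F_iF_j$ has a \emph{finite} chaos expansion (Lemma~\ref{DK-lemma}), one has $\Gamma(F_i,F_j)=\tfrac12(q_i+q_j+L)(F_iF_j)=\sum_{k=1}^{q_i+q_j-1}\tfrac{q_i+q_j-k}{2}\,J_k(F_iF_j)$, hence $\Var(\Gamma)\le q_j^2\sum_{k}\Var(J_k(F_iF_j))$; Lemma~\ref{tech1} then bounds this sum by $\Cov(F_i^2,F_j^2)-2\E[F_iF_j]^2$ plus an influence term, \emph{without ever computing the projections or touching the product formula}. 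A second pass with the same lemma, now applied to $F_i^2$ and $F_j^2$ separately, reduces $\Cov(F_i^2,F_j^2)-2\E[F_iF_j]^2$ to the diagonal fourth cumulants and maximal influences. This is exactly the ``computational deadlock'' you flag in your last paragraph; the spectral route dissolves it, and in the non-symmetric case it also sidesteps the difficulty that the product formula by itself does not guarantee a finite chaos expansion (which is why Lemma~\ref{DK-lemma} is needed and reproved). For the third-order remainder the paper likewise avoids your cubic-in-gradients sum: exchangeability gives the exact identity $\rho_i=-4q_i\,\E\big[(F^{(n)}_i)^4\big]+12\,\E\big[(F^{(n)}_i)^2\,\Gamma(F^{(n)}_i,F^{(n)}_i)\big]$ (Proposition~\ref{GAMMA_exch}(c)), and the same spectral estimate on $\Gamma(F_i,F_i)$ handles the second term---so no $L^4$ control of individual gradients $D_kF^{(n)}_i$ is ever required. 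In short, your plan is sound in outline, but the obstacle you correctly anticipate is precisely where the paper's approach differs and wins.
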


\bigskip

The above theorem is analogous to the Peccati-Tudor theorem on a Gaussian space \cite{PTudor}, so we call it a Peccati-Tudor type theorem, which explains our title.    One of the main tools we need for the proof is the following ingredient from Stein's method of exchangeable pairs.  As one will see easily, we can obtain a quantitative version of Theorem \ref{PT-Rad}, which will be an analogue to \cite[Theorem 1.7]{DVZ17} and left for interested readers.

Recall first that two random variables $W$ and  $W'$, defined on a common probability space, are said to form an exchangeable pair, if $(W, W')$ has the same distribution as  $(W', W)$.

\begin{prop}[Proposition 3.5 in \cite{DVZ17}]\label{Meckes09} 
For each $t > 0$, let $(F,F_t)$ be an exchangeable pair of centred  $d$-dimensional random vectors defined on a common probability space. Let $\mathscr{G}$ be a $\sigma$-algebra that contains $\sigma\{F\}$. Assume that $\Lambda\in\R^{d\times d}$ is an invertible deterministic matrix and $\Sigma$ is a symmetric, non-negative definite deterministic matrix such that 
\begin{enumerate}
\item[(a)]
${\displaystyle \lim_{t\downarrow 0} \frac{1}{t}\,  \E\big[ F_t - F |\mathscr{G} \big] =  - \Lambda F}$ in $L^1(\Omega)$,
\item[(b)]
${\displaystyle
 \lim_{t\downarrow 0} \frac1t\, \E\big[ ( F_t - F)(F_t- F)^T |\mathscr{G} \big] =  2\Lambda \Sigma + S }$ in $L^1(\Omega, \| \cdot \| _{\rm H.S.})$ for some   matrix $S=S(F)$, and with $\| \cdot \| _{\text{H.S.}}$ the Hilbert-Schmidt norm,
 \item[(c)]  for each $i\in\{1,\ldots, d\}$, there exists some real number $\rho_i(F)$ such that
${\displaystyle
\lim_{t\downarrow 0}   \frac{1}{t}\, \E\big[ ( F_{i,t} - F_i)^4 \big] =  \rho_i(F)}
$, 
where $F_{i,t}$ (resp. $F_i$) stands for the $i$-th coordinate of $F_t$ (resp. $F$).
\end{enumerate}
Then,   for $g\in C^3(\R^d)$ such that $g(F), g(Z)\in L^1(\P)$, we have, with $Z\sim \mathcal{N}(0,\Sigma)$,
\begin{eqnarray*}
&&\big| \E  [ g(F) ] - \E [g(Z)  ] \big| \\
&\leq&  \frac{ \| \Lambda^{-1} \| _{\text{op}}  \sqrt{d}\, M_2(g)  }{4} \E \left[\,\,  \sqrt{ \sum_{i,j=1}^d S_{i,j}^2 } \,\, \right] + \frac{\sqrt{d}  M_3(g) \| \Lambda^{-1} \| _{\text{op}}}{18} \sqrt{\sum_{i=1}^d 2\Lambda_{i,i} \Sigma_{i,i} + \E[S_{i,i} ] }\sqrt{\sum_{i=1}^d\rho_i(F)}\,,
\end{eqnarray*}
where $M_k(g) : = \sup_{x\in\R^d} \big\| D^k g(x) \big\|_\text{op}$ with $\| \cdot \| _{\text{op}}$ the operator norm.
                 \end{prop}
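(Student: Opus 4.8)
The plan is to run Stein's method of exchangeable pairs with the Stein operator attached to the multivariate Ornstein--Uhlenbeck dynamics with drift $\Lambda$ and stationary law $\mathcal{N}(0,\Sigma)$, namely
\begin{align*}
\mathcal{A}\phi(x) := \langle \Lambda\Sigma, \mathrm{Hess}\,\phi(x)\rangle_{\mathrm{H.S.}} - \langle \Lambda x, \nabla\phi(x)\rangle .
\end{align*}
A Gaussian integration by parts gives $\E[Z_k\,\partial_i\phi(Z)] = \sum_l \Sigma_{kl}\,\E[\partial_l\partial_i\phi(Z)]$, whence $\E[\langle \Lambda Z,\nabla\phi(Z)\rangle] = \E[\langle \Lambda\Sigma,\mathrm{Hess}\,\phi(Z)\rangle_{\mathrm{H.S.}}]$, so $\E[\mathcal{A}\phi(Z)] = 0$ and $\mathcal{A}$ characterises the target. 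For the given $g\in C^3(\R^d)$ I would solve the Stein equation $\mathcal{A}\phi = g - \E[g(Z)]$ by setting $\phi(x) = -\int_0^\infty \big(P_s g(x) - \E[g(Z)]\big)\,ds$, where $(P_s)_{s\geq 0}$ is the semigroup generated by $\mathcal{A}$; differentiating under the integral and using the decay of $\|e^{-s\Lambda}\|_{\mathrm{op}}$ furnishes the regularity estimates $M_2(\phi)\leq \tfrac12 \|\Lambda^{-1}\|_{\mathrm{op}} M_2(g)$ and $M_3(\phi)\leq \tfrac13 \|\Lambda^{-1}\|_{\mathrm{op}} M_3(g)$. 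This reduces the whole task to estimating $\E[\mathcal{A}\phi(F)] = \E[g(F)] - \E[g(Z)]$.

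Next I would exploit exchangeability. Since $(F,F_t)\eqd(F_t,F)$ and $\phi$ is smooth, $\E[\phi(F_t)-\phi(F)] = 0$; Taylor expanding to third order around $F$ yields
\begin{align*}
0 = \E\big[\langle \nabla\phi(F), F_t - F\rangle\big] + \tfrac12\,\E\big[\langle \mathrm{Hess}\,\phi(F),(F_t-F)(F_t-F)^T\rangle_{\mathrm{H.S.}}\big] + \mathcal{R}_t ,
\end{align*}
with $|\mathcal{R}_t|\leq \tfrac16 M_3(\phi)\,\E[\|F_t-F\|^3]$. Because $\sigma\{F\}\subseteq\mathscr{G}$, I may insert conditional expectations given $\mathscr{G}$ in the first two terms, divide by $t$, and let $t\downarrow 0$, feeding in the $L^1$-convergences (a) and (b). This produces the key identity $\E[\langle \Lambda F,\nabla\phi(F)\rangle] = \E[\langle \Lambda\Sigma,\mathrm{Hess}\,\phi(F)\rangle_{\mathrm{H.S.}}] + \tfrac12\E[\langle S,\mathrm{Hess}\,\phi(F)\rangle_{\mathrm{H.S.}}] + \mathcal{R}$, where $\mathcal{R} = \lim_{t\downarrow 0} t^{-1}\mathcal{R}_t$. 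Substituting into $\E[\mathcal{A}\phi(F)]$, the two main terms cancel exactly, leaving
\begin{align*}
\E[g(F)] - \E[g(Z)] = -\tfrac12\,\E\big[\langle S,\mathrm{Hess}\,\phi(F)\rangle_{\mathrm{H.S.}}\big] - \mathcal{R}.
\end{align*}

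It then remains to bound the two error terms. For the first, Cauchy--Schwarz in Hilbert--Schmidt norm together with $\|\mathrm{Hess}\,\phi\|_{\mathrm{H.S.}}\leq \sqrt{d}\,M_2(\phi)$ gives $\tfrac12|\E[\langle S,\mathrm{Hess}\,\phi(F)\rangle]| \leq \tfrac14\|\Lambda^{-1}\|_{\mathrm{op}}\sqrt{d}\,M_2(g)\,\E[(\sum_{i,j}S_{i,j}^2)^{1/2}]$, which is the first summand of the asserted bound. For $\mathcal{R}$, I would write $t^{-1}\E\|F_t-F\|^3 \leq (t^{-1}\E\|F_t-F\|^2)^{1/2}(t^{-1}\E\|F_t-F\|^4)^{1/2}$; the trace of (b) gives $t^{-1}\E\|F_t-F\|^2\to \sum_i (2\Lambda_{i,i}\Sigma_{i,i} + \E[S_{i,i}])$, while the elementary bound $\|F_t-F\|^4 \leq d\sum_i (F_{i,t}-F_i)^4$ and hypothesis (c) give $t^{-1}\E\|F_t-F\|^4\to d\sum_i\rho_i(F)$. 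Combined with $M_3(\phi)\leq \tfrac13\|\Lambda^{-1}\|_{\mathrm{op}}M_3(g)$ and the factor $\tfrac16$, this reproduces the second summand, and adding the two estimates proves the proposition.

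The hard part will be the rigorous justification of the infinitesimal $t\downarrow 0$ passage, most delicately the control of the third-order remainder $\mathcal{R}$: one must interchange $\lim_{t\downarrow 0} t^{-1}$ with the conditional-expectation expansion and secure enough uniform integrability so that the $L^1$-hypotheses (a), (b) and the scalar limits (c) can all be absorbed into a single limit. The construction of the Stein solution and its derivative bounds for the (possibly non-symmetric) drift $\Lambda$ is the other point requiring care, although it is routine once the associated Ornstein--Uhlenbeck semigroup is in hand.
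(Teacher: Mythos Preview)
The paper does not prove this proposition; it is quoted verbatim from \cite{DVZ17} (Proposition~3.5 there) and used as a black box, so there is no ``paper's own proof'' to compare against. Your outline is precisely the standard Meckes--Chatterjee--Meckes argument underlying that reference: Stein operator for the $\Lambda$-OU generator, semigroup solution of the Stein equation with the derivative bounds $M_k(\phi)\le k^{-1}\|\Lambda^{-1}\|_{\mathrm{op}}M_k(g)$, Taylor expansion of $\E[\phi(F_t)-\phi(F)]=0$ to third order, insertion of $\E[\,\cdot\mid\mathscr{G}]$, division by $t$ and passage to the limit via (a)--(c), and the Cauchy--Schwarz splitting $t^{-1}\E\|F_t-F\|^3\le (t^{-1}\E\|F_t-F\|^2)^{1/2}(t^{-1}\E\|F_t-F\|^4)^{1/2}$ for the remainder. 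The constants you track ($\tfrac14$, $\tfrac{1}{18}$, the factor $\sqrt{d}$ from $\|x\|^4\le d\sum_i x_i^4$) match the statement, so the approach is correct and coincides with the one in the cited source.
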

The rest of this paper is organised as follows: Section 1.3 is devoted to a brief overview of related results and we sketch our strategy of proving Theorem \ref{PT-Rad} in Section 1.4; in Section 2,  we provide preliminary knowledge on Rademacher chaos and a crucial {\it exchangeable pairs coupling}.  The proof of our main result will be given in Section 3 and some discussion about universality around Rademacher chaos will be presented in Section 4.

\subsection{A brief overview of literature}

Soon after  the appearance of \cite{NP08}, Nourdin, Peccati and Reinert combined Stein's method and a discrete version of Malliavin calculus to study the Gaussian approximation of Rademacher functionals in the symmetric case.  This analysis is known as the {\it discrete Malliavin-Stein approach}.  It has been generalised by the authors of \cite{KRT16, KRT_aop} not only in the multivariate setting but also in the general case where functionals involving non-symmetric, non-homogeneous Rademacher random variables were investigated.  Recently,  D\"obler and Krokowski \cite{DK17} gave the following {\it fourth-moment-influence} bound and pointed out that it is {\it optimal} in the sense that there are examples, in which the fourth moment condition alone would not guarantee the asymptotic normality. 
 
\begin{thm}[Theorem 1.1 in \cite{DK17}] \label{DK17-thm}  Fix  $p\in\N$ and $f\in\H^{\odot p}_0$ satisfying $p! \| f \| ^2_{\H^{\otimes p}} = 1$. Let $Z$ be a standard Gaussian   and $F =  Q_p(f; \mathbf{Y}  )\in L^4(\P)$, then we have the following bound in Wasserstein distance:
\[
 d_{\rm W}\big( F, Z \big) : = \sup_{\| h' \| _\infty \leq 1} \Big\vert \E\big[ h ( F  ) - h(Z) \big] \Big\vert  \leq C_1 \sqrt{\big\vert \E[ F^4 ] - 3 \big\vert    } + C_2 \sqrt{\mathcal{M}(f)} \,\,,
\]
where   $C_1, C_2$ are two numerical constants.  This result echoes the remarkable de Jong's central limit theorem \cite{deJong90}.

\end{thm}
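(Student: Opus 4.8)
Since Theorem \ref{DK17-thm} is the one-dimensional case of the situation addressed by Theorem \ref{PT-Rad}, the plan is to run the exchangeable-pairs scheme of Proposition \ref{Meckes09} with $d=1$ on $F=Q_p(f;\mathbf Y)$ and reduce the bound to two chaos-expansion estimates. (D\"obler and Krokowski themselves argue via the discrete Malliavin--Stein calculus, but I follow the exchangeable-pairs route, since that is the apparatus assembled in the present paper; the two reductions meet at the same contraction inequalities.) First I would construct the coupling: let $(X_k^{*})_{k\in\N}$ be an independent copy of $\mathbf X$ and, for $t>0$, form $\mathbf X^{t}$ by keeping $X_k^{t}=X_k$ with probability $e^{-t}$ and setting $X_k^{t}=X_k^{*}$ otherwise, independently over $k$; put $F_t:=Q_p(f;\mathbf Y^{t})$, with $\mathbf Y^{t}$ the normalised version of $\mathbf X^{t}$. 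Then $(F,F_t)$ is an exchangeable pair, and since $\E[Y_k^{t}\mid\mathbf X]=e^{-t}Y_k$, the coordinates being conditionally independent and $f$ vanishing on diagonals yield $\E[F_t\mid\mathbf X]=e^{-pt}F$; dividing by $t$ and letting $t\downarrow0$ gives condition (a) with $\Lambda=p$.

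For (b) I would expand $\E[(F_t-F)^2\mid\mathbf X]=\E[F_t^2\mid\mathbf X]-2e^{-pt}F^2+F^2$, compute $\E[F_t^2\mid\mathbf X]$ coordinatewise through $\E[(Y_k^{t})^2\mid\mathbf X]=e^{-t}Y_k^2+(1-e^{-t})$, and differentiate at $t=0$; this produces a limit of the form $2p\,\E[F^2]+S$, with $\E[F^2]=p!\|f\|^2=1$ and $S$ an explicit finite sum of discrete multiple integrals of the off-diagonal contractions $f\star_r f$, $1\le r\le p-1$, plus correction terms carried by the moments $\E[Y_k^3]$ and $\E[Y_k^4]$ that reflect the discreteness of the Rademacher setting. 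For (c), the same expansion, keeping only the single-coordinate contributions that survive division by $t$, gives $\rho(F)=\sum_{k\in\N}\E\big[(Y_k^{*}-Y_k)^4\big]\,\E\big[(D_kF)^4\big]$, where $D_kF=p\,Q_{p-1}(f(\cdot,k);\mathbf Y)$ is the $k$-th discrete derivative, so that $\rho(F)$ is a weighted version of the de Jong-type quantity $\sum_k\E[(D_kF)^4]$.

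The substantive part is then two estimates. For $S$: by orthogonality of Rademacher chaoses, $\E[S^2]$ equals a sum of squared norms of the contractions $f\star_r f$ ($1\le r\le p-1$) plus squared norms of the discrete corrections; each correction carries, for every diagonal overlap forced by the Rademacher product formula, a factor $\sum_{i_1,\dots,i_{p-1}}f(i_1,\dots,i_{p-1},k)^2\le\mathcal M(f)$ and is therefore $O_p(\mathcal M(f))$, while the contraction norms are controlled by $|\E[F^4]-3|+O_p(\mathcal M(f))$ --- this last inequality is the Rademacher counterpart of the Nourdin--Peccati fourth-cumulant estimate and is essentially the analytic heart of \cite{DK17} (see also \cite{KRT16, KRT_aop}). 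Hence $\E|S|\le\sqrt{\E[S^2]}\lesssim_p\sqrt{|\E[F^4]-3|}+\sqrt{\mathcal M(f)}$. For $\rho(F)$: a parallel expansion of $(D_kF)^4$, whose leading piece is $3\big(\E[(D_kF)^2]\big)^2$ plus lower contractions of $f(\cdot,k)$, followed by summation over $k$ and extraction of a single supremum, yields $\sum_k\big(\E[(D_kF)^2]\big)^2\le\big(\sup_k\E[(D_kF)^2]\big)\sum_k\E[(D_kF)^2]\lesssim_p\mathcal M(f)$, the remaining pieces being handled as for $S$; one arrives at an estimate of the type $\rho(F)\lesssim_p\mathcal M(f)+|\E[F^4]-3|$. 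Feeding $\Lambda=p$, $\Sigma=1$ and these two bounds into Proposition \ref{Meckes09} with $d=1$ gives $|\E[g(F)]-\E[g(Z)]|\lesssim_p M_2(g)\sqrt{|\E[F^4]-3|}+\big(M_2(g)+M_3(g)\big)\sqrt{\mathcal M(f)}$ for all $g\in C^3$.

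Two points require real care, the second being where I expect the main obstacle. First, Proposition \ref{Meckes09} controls only $C^3$ test functions, whereas $d_{\rm W}$ ranges over $1$-Lipschitz $h$; I would therefore rerun the argument --- coupling, identification of $S$, estimate of $\rho(F)$ all unchanged --- against the one-dimensional Stein equation $g'(x)-xg(x)=h(x)-\E h(Z)$, whose solution satisfies $\|g'\|_\infty\lesssim\|h'\|_\infty$ and $\|g''\|_\infty\lesssim\|h'\|_\infty$, so that only two derivatives enter and one lands in $d_{\rm W}$ directly, with the combinatorial constants absorbed into $C_1,C_2$. Second, in the general (non-symmetric) case the increment moments $\E\big[(Y_k^{*}-Y_k)^4\big]=2/(p_kq_k)$ are not uniformly bounded, so $\rho(F)=2\sum_k(p_kq_k)^{-1}\E[(D_kF)^4]$ is not transparently $O_p(\mathcal M(f))$ by pulling out a supremum alone; controlling it genuinely requires combining the exchangeable-pairs output with a direct chaos-decomposition bound on the de Jong-type quantity (this is exactly the role of the ``estimates via chaos decomposition'' announced in the introduction), exploiting the hypothesis $F\in L^4(\P)$ and the normalisation $p!\|f\|^2=1$ to absorb the unbounded moment factors. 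The only remaining routine point --- interchanging $t\downarrow0$ with the infinite summations in (a)--(c) --- is handled by the $L^4$ hypothesis together with hypercontractivity on Rademacher chaos, as in \cite{DVZ17}.
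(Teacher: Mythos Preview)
Your overall framework --- build the exchangeable pair $(F,F_t)$ from the resampled sequence $\mathbf X^t$, verify the three limit conditions, and feed them into a Meckes-type inequality --- is exactly the paper's route. Where you diverge is in the \emph{identification} of $S$ and $\rho(F)$, and that divergence is what creates the obstacle you yourself flag at the end.

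The paper does not compute $S$ as an explicit sum of contractions $f\star_r f$, nor does it compute $\rho(F)$ coordinate-wise as $2\sum_k(p_kq_k)^{-1}\E[(D_kF)^4]$. Instead it uses two structural shortcuts. First, by Lemma~\ref{DK-lemma}, $F^2$ has a finite chaos expansion, so the carr\'e du champ $\Gamma(F,F)=\tfrac12(2p+L)F^2$ is expressed purely through the projections $J_k(F^2)$ (equation~\eqref{sp-dec}); this gives $S=2\Gamma(F,F)-2p$ and immediately $\Var(p^{-1}\Gamma(F,F))\le\sum_{k=1}^{2p-1}\Var(J_k(F^2))$, which Lemma~\ref{tech1} bounds by $\E[F^4]-3+\gamma_p\,\mathcal M(f)$. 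No product formula, no $\star_r$-contractions, and crucially no $p_k,q_k$ appear. Second, for $\rho(F)$ the paper uses the \emph{exchangeability identity}
\[
\E\big[(F_t-F)^4\big]=4\,\E\big[F^3(F_t-F)\big]+6\,\E\big[F^2(F_t-F)^2\big],
\]
(Proposition~\ref{GAMMA_exch}(c)), yielding $\rho(F)=-4p\,\E[F^4]+12\,\E[F^2\Gamma(F,F)]$. This is then bounded, again via the chaos projections of $F^2$, by $8p(\E[F^4]-3)+12p\gamma_p\,\mathcal M(f)$. The quantity $\sum_k(p_kq_k)^{-1}\E[(D_kF)^4]$ never enters, and the unbounded moment factors that worried you simply do not appear.

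So your ``second point requiring real care'' is not an inherent obstacle of the method but an artefact of computing $\rho(F)$ the hard way. The exchangeability rewriting of $\E[(F_t-F)^4]$ is the missing idea; once you use it, together with the spectral description of $\Gamma(F,F)$, both estimates reduce to Lemma~\ref{tech1} and the general (non-symmetric) case goes through with no extra work. Your first concern, about $C^3$ versus Lipschitz test functions, is also unnecessary: the paper invokes the univariate Proposition~\ref{Meckes06}, which is stated directly for the Wasserstein distance, rather than specialising Proposition~\ref{Meckes09} to $d=1$.
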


 Besides the aforementioned references, Krokowski \cite{Kai-Poisson} derived a multiplication  formula that generalises the one in \cite{NPR-rad}, and applying as well the Chen-Stein's method, he   studied the Poisson approximation of Rademacher functionals.   Independently, Privault and Torrisi \cite{Torrisi-P} also derived a multiplication  formula and moreover, they obtained a generalisation of the approximate chain rule from \cite{NPR-rad}, and applied them to study Gaussian and Poisson approximation of Rademacher functionals in the general case.  Concerning the normal approximation in \cite{NPR-rad} or \cite{Torrisi-P}, the authors were only able to obtain the bounds in some ``smooth-version" distance, due to regularity involving in their chain rules and Stein's solution.   In a follow-up work, Zheng \cite{GZ16} obtained a neater chain rule that requires {\it minimal} regularity (see \cite[Remark 2.3]{GZ16}), from which he obtained the bound in Wasserstein distance as well as an almost sure central limit theorem for Rademacher chaos.     It is worthy pointing out that without using any chain rule, the authors of \cite{KRT16, KRT_aop} used carefully a representation of the discrete Malliavin gradient and the fundamental theorem of calculus to deduce the Berry-Esseen bound for normal approximation. Using similar ideas,  D\"obler and Krokowski \cite{DK17}  also provided the Berry-Esseen bound for their {\it fourth-moment-influence} theorem, which is of the same order as the above  Wasserstein bound.

\subsection{Strategy of proving Theorem \ref{PT-Rad}}

 Stein's method of exchangeable pairs was first systematically  presented   in Charles Stein's 1986 monograph  \cite{Stein86}, which was subsequently developed and ramified by many authors. Concerning our work, we mention in particular  E. Meckes' dissertation \cite{Meckes_diss}, in which she developed an infinitesimal version of this method to obtain total-variation bound in normal approximation. This infinitesimal version of Stein's method of exchangeable pairs was later generalised in \cite{CM08, Meckes2009} for the multivariate normal approximation.

 As announced, Proposition \ref{Meckes09} is one of our main tools, and it can be seen as a generalisation of \cite{Meckes2009}.  To use it, we need  to construct a suitable family of random vectors $F_t$, $t\geq 0$ such that $(F_t, F)$ is exchangeable for each $t$ and satisfies several {\it asymptotic regression} conditions.  In fact, we will first construct a family of Rademacher sequences $\mathbf{X}^t$ such that $\big( \mathbf{X}^t, \mathbf{X} \big)$ is an exchangeable pair of $\{ \pm 1 \}^\N$-valued random variables for each $t\geq 0$. More precisely, let $\mathbf{X}'$ be an independent copy of $\mathbf{X}$ and $\Theta = (\theta_k, k\in\N)$ be a sequence of  \emph{i.i.d.} standard exponential random variables such that $\mathbf{X}$, $\mathbf{X}'$ and $\Theta$ are independent.  For each $t\in[0,+\infty)$, we define 
 \[
 X_k^t : = X_k \mathbf{1}_{(\theta_k \geq t)} + X'_k \mathbf{1}_{(\theta_k < t)} \,\, .
 \]
   It has been pointed out in \cite{KRT_aop} that  $\mathbf{X}^t$ has the same distribution as $\mathbf{X}$, see also Remark 3.4 in \cite{NPR-rad} for the symmetric case.  However, both of these two articles did not explicitly state the exchangeability of  $\mathbf{X}^t$ and  $\mathbf{X}$, which will be proved in Lemma \ref{exch}. Assuming this and writing  $F = \mathfrak{f}(\mathbf{X} )$ for some representative $\mathfrak{f}: \{ \pm 1 \}^\N \to \R^d$, we can set $F_t = \mathfrak{f}(\mathbf{X}^t )$. It is easy to see that the exchangeability can be passed to $(F,F_t)$ now.   If $F= \big( Q_{p_1}(f_1; \mathbf{Y}) , \ldots, Q_{p_d}(f_d; \mathbf{Y}) \big)$, then we can write  $F_t = \big( Q_{p_1}(f_1; \mathbf{Y}^t) , \ldots, Q_{p_d}(f_d; \mathbf{Y}^t) \big)$ with $\mathbf{Y}^t$ the normalised version of $\mathbf{X}^t$ in the sense of \eqref{N-Version}.   
  
  Moreover, this exchangeable pairs coupling fits well with the Mehler's formula, which gives a nice representation of the discrete {\it Ornstein-Uhlenbeck semigroup} $\big( P_t, t\geq 0\big)$: given $F\in L^2\big(\Omega, \sigma\{ \mathbf{X} \} , \P \big)$, we can first write $F = \mathfrak{f}(\mathbf{X} )$ for some $\mathfrak{f}: \{ \pm 1 \}^\N \to \R$, then the {\it Mehler formula} (\cite[Proposition 3.1]{KRT_aop}) states that 
 \begin{align} \label{Mehler}
 P_t F = \E\Big[ \mathfrak{f}\big( \mathbf{X}^t \big)  \,\vert \, \sigma\{ \mathbf{X} \} \Big] \, .
 \end{align}
 For $\xi\in \mathcal{C}_p$, as we will see in Section 2, $P_t\xi = e^{-pt}\xi$, then the asymptotic linear regression (a) in Proposition \ref{Meckes09}  follows easily, and with slightly more effort, the higher order regressions can also be obtained, see Proposition \ref{GAMMA_exch}. 
 
 Another important ingredient in our proof is  Ledoux spectral point-of-view for fourth moment theorem \cite{Ledoux2010}, which was later refined \emph{e.g.} in \cite{ACP14, CNPP16}.  Such a spectral viewpoint helps one get rid of some computational deadlock that is usually caused by  the complicated multiplication formula. In particular, our proof is motivated by some arguments in \cite{CNPP16}.

 As a byproduct of our strategy, we will provide a short proof of  Theorem \ref{DK17-thm} in the beginning of Section 3.  Some estimate from this proof will also be helpful for our multivariate case.

 \paragraph{Acknowledgement.} Part of this work was done during a visit at National University of Singapore.  I thank very much Professor Louis H. Y. Chen at NUS for his very generous support and kind hospitality.  The gratitude also goes to Professor Giovanni Peccati for sharing his alternative proof of Lemma 2.4 in \cite{DP17}, which motived our proof of Lemma \ref{DK-lemma}.

\section{Preliminaires}
Denote by $\sigma\{\mathbf{X}\}$ the $\sigma$-algebra generated by the sequence $\mathbf{X}$, and note that $\sigma\{ \mathbf{X} \} = \sigma\{ \mathbf{Y} \}$.  The   {\it Wiener-It\^o-Wash chaos decomposition} asserts that any random variable $F\in L^2\big(\Omega, \sigma\{\mathbf{X}\}, \P \big)$ admits a unique representation
\begin{align}\label{chaos}
F= \E[ F] + \sum_{p\geq 1} Q_p(f_p) \quad\text{with $f_p\in \H^{\odot p}_0$ for each $p\in\N$,}
\end{align}
where the above series converges in $L^2(\P)$. We denote by  $J_k(\cdot)$ the projection onto the $k$-th Rademacher chaos $\mathcal{C}_k$: for $F$ given in \eqref{chaos}, $J_p(F) = Q_p(f_p)$ for each $p\in\N$, and $J_0(F) = \E[F]$.  It is not difficult to check that for $f\in\H^{\odot p}_0$ and $g\in\H^{\odot q}_0$, it holds that
 \[
  \E\big[ Q_p(f) Q_q(g) \big] = \mathbf{1}_{\{ p=q\}} p! \langle f, g \rangle_{\H^{\otimes p}}  \, .
 \]
This is known as the {\it orthogonality} property of the multiple integrals.   One can refer to N. Privault's survey \cite{Privault-survey} for more details and relevant discrete Malliavin calculus. 

The authors of \cite{NPR-rad} established a multiplication formula for discrete multiple integral in the symmetric case:  given   $f\in\H^{\odot p}_0$  and   $g\in\H^{\odot q}_0$, one has
\begin{align}\label{product}
Q_p(f)  Q_q(g) = \sum_{r=0}^{p\wedge q} r! {p\choose r} {q\choose r} Q_{p+q-2r}\big( f\widetilde{\otimes}_r g\mathbf{1}_{\Laplace_{p+q-2r}} \big) \,\,,
\end{align}
where the $r$-{\it contraction} $f\otimes_r g$ of $f$ and $g$ is defined by 
\[
 (f\otimes_r g)\big(i_1, \ldots, i_{p-r}, j_1, \ldots, j_{q-r} \big) := \sum_{k_1, \ldots, k_r\in\N} f\big(i_1, \ldots, i_{p-r}, k_1, \ldots, k_r \big) \cdot g\big(j_1, \ldots, j_{q-r}, k_1, \ldots, k_r \big)
\]
 and $f\widetilde{\otimes}_r g$ is the canonical symmetrisation of $f\otimes_r g$, {\it i.e.} for any $h\in\H^{\otimes p}$,  $\widetilde{h}$ is given by 
 \[
 \widetilde{h}(i_1, \ldots, i_p)  = \frac{1}{p!} \sum_{\sigma\in\mathfrak{S}_p} h \big(i_{\sigma(1)}, \ldots, i_{\sigma(p)} \big)\,\,,
 \]
 with $\mathfrak{S}_p$ the permutation group over $\{ 1, \ldots, p \}$.    We follow the convention that $\widetilde{c} = c$ for each $c\in\R$. Note it is easy to deduce from the Cauchy-Schwarz inequality  that $\| \widetilde{h} \| _{\H^{\otimes p }} \leq \| h \| _{\H^{\otimes p}}$ for each $h\in \H^{\otimes p}$, then applying the above orthogonality property and mathematical induction gives us a weak form of the {\it hypercontractivity} property in the symmetric case, namely,  $\E\big[ \vert F \vert^r \big] < +\infty$ for any $F\in \mathcal{C}_p$, $p,r\in\N$.
   
   However, in the general case,  one can {\it not} even guarantee the existence of finite fourth moment of  a generic  multiple integral.  Such a phenomenon, due to the asymmetry, is also revealed in the corresponding multiplication formulae, see   Proposition 2.2 in \cite{Kai-Poisson} and Proposition 5.1 in \cite{Torrisi-P}.  As already pointed out in \cite{DK17},  given $F\in \mathcal{C}_p\cap L^4(\P)$,  one can {\it not} directly deduce from these multiplication formulae that $F^2$ admits a finite chaotic decomposition.  Adapting the induction arguments from the proof of \cite[Lemma 2.4]{DP17}, D\"obler and Krokowski gave the following positive result. 
   
   \begin{lem}[Lemma 2.3 in \cite{DK17}]\label{DK-lemma}  Let $F = Q_p(f)   \in L^4(\P)$ and $G = Q_q(g)\in  L^4(\P)$ for some $f\in \H^{\odot p}_0$ and $g\in \H^{\odot q}_0$. Then $FG\in L^2(\P)$ admits a finite chaos decomposition of the form 
   \[
   FG = \E[FG] +  \sum_{k=1}^{p+q-1} J_k(FG) + Q_{p+q}\big( f\widetilde{\otimes} g \mathbf{1}_{\Laplace_{p+q}} \big) \,\,.
   \]
In particular, if $Q_1(h)$ belongs to $L^4(\P)$ for some $h\in\H$, then 
\[
Q_1(h)^2 = \| h \| ^2 _\H + Q_1(w) + Q_2\big(h\widetilde{\otimes} h \mathbf{1}_{\Laplace_2}\big) \quad\text{with $w(k) = \frac{h(k)^2(q_k - p_k)}{\sqrt{p_kq_k}}, k\in\N$.}
\]   
({\it As this lemma is crucial for our work and for the sake of completeness, we provide in Section 3.3 another and direct proof  suggested by Giovanni Peccati.})   
   
   \end{lem}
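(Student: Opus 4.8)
The plan is to reduce to functionals of finitely many Rademacher variables, where the chaos expansion is automatically finite, and then to pass to the limit by testing against multiple integrals with finitely supported kernels. The $L^4$ hypothesis enters only at the outset: since $F,G\in L^4(\P)$, the Cauchy--Schwarz inequality gives $\E[(FG)^2]\le\sqrt{\E[F^4]\,\E[G^4]}<+\infty$, so $FG\in L^2(\P)$ and admits a decomposition $FG=\sum_{k\ge 0}J_k(FG)$ as in \eqref{chaos}; it then suffices to show that $J_k(FG)=0$ for every $k>p+q$ and that the kernel of $J_{p+q}(FG)$ is $f\widetilde{\otimes}g\,\mathbf{1}_{\Laplace_{p+q}}$.

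First I would truncate. Put $\mathcal{F}_N:=\sigma\{X_1,\ldots,X_N\}$ and let $f^{(N)}$, $g^{(N)}$ be the restrictions of $f$, $g$ to indices in $\{1,\ldots,N\}$. Since a monomial $Y_{i_1}\cdots Y_{i_p}$ appearing in $Q_p(f)$ with some index $>N$ has zero conditional expectation given $\mathcal{F}_N$ (independence and centring of the $Y_k$'s), one gets $F_N:=\E[F\,|\,\mathcal{F}_N]=Q_p(f^{(N)})$ and likewise $G_N:=\E[G\,|\,\mathcal{F}_N]=Q_q(g^{(N)})$. Because $F,G\in L^2(\P)$, we have $F_N\to F$ and $G_N\to G$ in $L^2(\P)$ with $\|F_N\|_2\le\|F\|_2$, hence $F_NG_N\to FG$ in $L^1(\P)$. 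Crucially, each $F_NG_N$ is a polynomial in the bounded variables $X_1,\ldots,X_N$, hence bounded, and therefore has a \emph{finite} chaos decomposition.

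Next I would exploit an elementary identity. From the two possible values $\sqrt{q_k/p_k}$ and $-\sqrt{p_k/q_k}$ of $Y_k$ one checks directly that $Y_k^2=1+\alpha_kY_k$ with $\alpha_k:=(q_k-p_k)/\sqrt{p_kq_k}$, and similarly $\E[Y_k^3]=\alpha_k$. Expanding $F_NG_N=\sum f^{(N)}(i_1,\ldots,i_p)\,g^{(N)}(j_1,\ldots,j_q)\,Y_{i_1}\cdots Y_{i_p}Y_{j_1}\cdots Y_{j_q}$ (the indices being distinct within each block, as $f,g$ vanish off the diagonals) and substituting $Y_k^2=1+\alpha_kY_k$ whenever an $i$-index coincides with a $j$-index, each monomial with a coincidence collapses to a multilinear monomial of degree $<p+q$, while the terms with all $p+q$ indices distinct assemble into $Q_{p+q}\big(f^{(N)}\widetilde{\otimes}g^{(N)}\mathbf{1}_{\Laplace_{p+q}}\big)$, using that $\mathbf{1}_{\Laplace_{p+q}}$ is permutation invariant so that $\widetilde{h\,\mathbf{1}_{\Laplace_{p+q}}}=\widetilde{h}\,\mathbf{1}_{\Laplace_{p+q}}$. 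Thus $F_NG_N=\E[F_NG_N]+\sum_{k=1}^{p+q-1}J_k(F_NG_N)+Q_{p+q}\big(f^{(N)}\widetilde{\otimes}g^{(N)}\mathbf{1}_{\Laplace_{p+q}}\big)$.

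Finally I would pass to the limit. For a finitely supported $h\in\H^{\odot m}_0$ the integral $Q_m(h)$ is bounded, so $\E[FG\,Q_m(h)]=\lim_{N}\E[F_NG_N\,Q_m(h)]$; by orthogonality this is $0$ for $m>p+q$, and for $m=p+q$ it equals $(p+q)!\,\langle f^{(N)}\widetilde{\otimes}g^{(N)}\mathbf{1}_{\Laplace_{p+q}},h\rangle_{\H^{\otimes(p+q)}}\to(p+q)!\,\langle f\widetilde{\otimes}g\,\mathbf{1}_{\Laplace_{p+q}},h\rangle_{\H^{\otimes(p+q)}}$, since $f^{(N)}\otimes g^{(N)}\to f\otimes g$ in $\H^{\otimes(p+q)}$ and symmetrisation and multiplication by $\mathbf{1}_{\Laplace_{p+q}}$ are continuous. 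As such $h$ are dense in $\H^{\odot m}_0$, this yields $J_m(FG)=0$ for $m>p+q$ and $J_{p+q}(FG)=Q_{p+q}(f\widetilde{\otimes}g\,\mathbf{1}_{\Laplace_{p+q}})$, which is the stated decomposition; the case $p=q=1$ follows by taking $F=G=Q_1(h)$, whence $\E[Q_1(h)^2]=\|h\|_{\H}^2$ and $\E[Q_1(h)^2Y_k]=h(k)^2\E[Y_k^3]=h(k)^2(q_k-p_k)/\sqrt{p_kq_k}$ identifies $w$. The only delicate point is the bookkeeping in the third step---checking that the top-degree part of the finite product is \emph{exactly} $Q_{p+q}(f^{(N)}\widetilde{\otimes}g^{(N)}\mathbf{1}_{\Laplace_{p+q}})$, with the right symmetrisation and the $\mathbf{1}_{\Laplace_{p+q}}$ factor; there is no genuine obstacle coming from $L^4$, since $F_NG_N$ is bounded and we only pair it with bounded integrals, so $L^1$-convergence is all that is needed.
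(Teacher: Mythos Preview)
Your proof is correct and takes a genuinely different route from the paper's. The paper proceeds via Stroock's formula: since $FG\in L^2(\P)$, the $m$-th kernel in its chaos expansion is $h_m(k_1,\ldots,k_m)=\tfrac{1}{m!}\E\big[D^{(m)}_{k_1,\ldots,k_m}(FG)\big]$, so the task becomes to compute the iterated discrete gradient of the product. Using the Leibniz-type rule $D_k(FG)=(D_kF)G+F(D_kG)-\tfrac{X_k}{\sqrt{p_kq_k}}(D_kF)(D_kG)$ and iterating over distinct indices $k_1<\cdots<k_{p+q}$, the paper observes that the ``middle'' cross terms drop out (because $D_\ell(X_kH)=X_kD_\ell H$ for $\ell\neq k$), leaving only pure left/right derivatives; counting that exactly $p$ derivatives must hit $F$ and $q$ must hit $G$ gives $D^{(p+q)}_{k_1,\ldots,k_{p+q}}(FG)=(p+q)!\,(f\widetilde{\otimes}g)(k_1,\ldots,k_{p+q})$ on $\Laplace_{p+q}$, and zero at all higher orders. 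The particular case is then read off from Stroock's formula at level $1$ by computing $D_k(Q_1(h)^2)$ directly.

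Your argument avoids the Malliavin machinery entirely: the truncation makes everything bounded so the finite chaos expansion is automatic, the algebraic identity $Y_k^2=1+\alpha_kY_k$ reduces products to multilinear monomials by hand, and the top chaos is read off from the distinct-index terms. The cost is the small limit step (testing against bounded $Q_m(h)$ and using $L^1$-convergence of $F_NG_N$), whereas the paper's computation is one-shot once Stroock's formula and the gradient product rule are in hand. Both approaches ultimately resolve the same combinatorics of coincident versus non-coincident indices; yours trades the discrete-gradient formalism for a density argument in $\H^{\odot m}_0$, which is arguably more self-contained for a reader without the discrete Malliavin toolkit.
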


\subsection{Ornstein-Uhlenbeck Structure and carr\'e du champs operator}

 Denote by $\text{dom}(L)$ the set of  those $F$ in \eqref{chaos} verifying 
\[
\sum_{p=1}^\infty  p^2 \E\big[ Q_p(f_p)^2 \big] = \sum_{p=1}^\infty p^2 p!  \| f_p  \| ^2_{\H^{\otimes p}} < +\infty \, .
\]
 For such a $F\in\text{dom}(L)$, we define $LF = - \sum_{p\geq 1} p Q_p(f_p)$. In particular, if $F\in \mathcal{C}_p$, $LF = - pF$. In other words, $- L$ has pure spectrum $\N\cup\{0\}$ and each eigenvalue $p\in\{0\}\cup\N$ corresponds to the eigenspace $\mathcal{C}_p$.
  And we call $L$ the {\it Ornstein-Uhlenbeck operator}, equipped with its domain $\text{dom}(L)$. 
  
   For $F, G\in\text{dom}(L)$ such that $FG\in\text{dom}(L)$, we define the {\it carr\'e du champs} operator $\Gamma(F,G)$ by setting
  \[
  \Gamma(F,G) : = \frac{1}{2} \big( L(FG) - FLG - GLF \big) \,\, .
  \]
 In particular, for $F, G$ as in Lemma \ref{DK-lemma}, one has $FG\in\text{dom}(L)$ and 
 \begin{align}\label{sp-dec}
  \Gamma(F,G) = \frac{1}{2}\big[ (p+q) +  L \big] \left( \sum_{k=0}^{p+q} J_k(FG)\right) = \frac{p+q}{2}\E[ FG] + \sum_{k=1}^{p+q-1} \frac{p+q- k}{2} J_k(FG) \, ,
 \end{align}
 and as a consequence of the orthogonality property, one deduces  that 
 \begin{align}
 \Var\big( \Gamma(F,G) \big) = \sum_{k=1}^{p+q-1}  \frac{(p+q- k)^2}{4} \Var\big(  J_k(FG) \big) \leq \max\{ p^2, q^2\} \sum_{k=1}^{p+q-1}  \Var\big(  J_k(FG) \big) \, ,\label{sp-eq}
 \end{align}
 which is all we need about the carr\'e du champs.
 
 \bigskip
 
For each $t\in[0, +\infty)$  and  $F$ as in \eqref{chaos}, we define
 $$ 
 P_tF := \E[F] + \sum_{p = 1}^\infty e^{-pt} Q_p(f_p) \,\,.
 $$
 $(P_t, t\geq 0 )$ is called the Ornstein-Uhlenbeck semigroup, which can be represented alternatively by the  Mehler formula \eqref{Mehler}. To verify \eqref{Mehler}, one can first consider $F = Q_p(f_p)$ in a Rademacher chaos with $f_p\in\H^{\odot p}_0$ having  finite support and then use the standard approximation argument.  Note that for $F\in\text{dom}(L)$, it is not difficult to check $t^{-1} (P_t F - F)$ converges in $L^2(\P)$ to $LF$, as $t\downarrow 0$.

 \subsection{Exchangeable pairs of Rademacher sequences}

\begin{lem}\label{exch} Let $\mathbf{X}^t $ and $\mathbf{X}$  be given as before, then $\big( \mathbf{X} , \mathbf{X}^t \big)$ has the same distribution as $\big( \mathbf{X}^t , \mathbf{X} \big)$. In particular, for any $f_j\in \H^{\odot p_j}_0$ with $p_j\in\N$, $j = 1, \ldots, d$,  
\begin{center} 
$\big( Q_{p_1}(f_1; \mathbf{Y}) , \ldots, Q_{p_d}(f_d; \mathbf{Y}) \big)   $ and $\big( Q_{p_1}(f_1; \mathbf{Y}^t) , \ldots, Q_{p_d}(f_d; \mathbf{Y}^t) \big)   $   
\end{center}
form an exchangeable pair, where $\mathbf{Y}^t$ stands for the normalised version of $\mathbf{X}^t$ in the sense of \eqref{N-Version}.

\end{lem}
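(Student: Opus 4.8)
The plan is to prove the exchangeability of $\big(\mathbf{X},\mathbf{X}^t\big)$ coordinate by coordinate and then lift it to the multiple integrals by a measurability argument. First I would observe that because the triple $\mathbf{X},\mathbf{X}',\Theta$ consists of mutually independent sequences of independent random variables, the pairs $\big((X_k,X_k^t),k\in\N\big)$ are independent across $k$; hence it suffices to show that for each fixed $k$ the pair $(X_k,X_k^t)$ is exchangeable, i.e.\ has the same law as $(X_k^t,X_k)$. Fix $k$ and set $\alpha:=\P(\theta_k\geq t)=e^{-t}$. Conditioning on the event $\{\theta_k\geq t\}$ gives $X_k^t=X_k$, and on its complement $X_k^t=X_k'$ with $X_k'$ an independent copy of $X_k$; a direct computation of the joint law on $\{\pm1\}^2$ then shows that both $(X_k,X_k^t)$ and $(X_k^t,X_k)$ assign mass $\alpha\,\P(X_k=a)+(1-\alpha)\P(X_k=a)\P(X_k=b)$ symmetrised appropriately — concretely, $\P(X_k=a,X_k^t=b)=\alpha\,\mathbf{1}_{\{a=b\}}\P(X_k=a)+(1-\alpha)\P(X_k=a)\P(X_k=b)$, which is manifestly symmetric in $(a,b)$. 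Since the marginals of $X_k$ and $X_k'$ agree, this yields $(X_k,X_k^t)\eqd(X_k^t,X_k)$, and by independence over $k$ we get $\big(\mathbf{X},\mathbf{X}^t\big)\eqd\big(\mathbf{X}^t,\mathbf{X}\big)$ as $\{\pm1\}^\N$-valued random variables (equipping $\{\pm1\}^\N$ with the product $\sigma$-algebra, for which equality of all finite-dimensional marginals suffices).

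For the second assertion I would argue as follows. The map $\mathbf{Y}^t$ is obtained from $\mathbf{X}^t$ by the deterministic, coordinatewise affine transformation \eqref{N-Version}, so there is a fixed measurable map $\Phi:\{\pm1\}^\N\to\R^d$ — namely $\Phi(\mathbf{x})=\big(Q_{p_1}(f_1;\mathbf{y}),\ldots,Q_{p_d}(f_d;\mathbf{y})\big)$ with $\mathbf{y}$ the normalised version of $\mathbf{x}$ — such that $\big(Q_{p_1}(f_1;\mathbf{Y}),\ldots,Q_{p_d}(f_d;\mathbf{Y})\big)=\Phi(\mathbf{X})$ and $\big(Q_{p_1}(f_1;\mathbf{Y}^t),\ldots,Q_{p_d}(f_d;\mathbf{Y}^t)\big)=\Phi(\mathbf{X}^t)$. (One subtlety: $Q_{p_j}(f_j;\cdot)$ is an infinite series, so $\Phi$ is a priori only defined off a null set; but since $Q_{p_j}(f_j;\mathbf{Y})\in L^2$ the series converges $\P$-a.s., and because $\mathbf{X}^t\eqd\mathbf{X}$ it also converges along $\mathbf{X}^t$ a.s., so $\Phi$ is well-defined a.s.\ on both arguments and one may take an arbitrary measurable representative.) Then applying the measurable map $(\mathbf{x},\mathbf{x}')\mapsto(\Phi(\mathbf{x}),\Phi(\mathbf{x}'))$ to the identity in distribution $\big(\mathbf{X},\mathbf{X}^t\big)\eqd\big(\mathbf{X}^t,\mathbf{X}\big)$ yields
\[
\big(\Phi(\mathbf{X}),\Phi(\mathbf{X}^t)\big)\eqd\big(\Phi(\mathbf{X}^t),\Phi(\mathbf{X})\big),
\]
which is exactly the claimed exchangeability of the pair of random vectors.

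The steps, in order, are: (1) reduce to a single coordinate $k$ via independence; (2) compute the two-point joint law of $(X_k,X_k^t)$ and check it is symmetric, using that $X_k'\eqd X_k$; (3) conclude $\big(\mathbf{X},\mathbf{X}^t\big)\eqd\big(\mathbf{X}^t,\mathbf{X}\big)$ on the product space; (4) identify the multiple integrals as $\Phi(\mathbf{X})$, $\Phi(\mathbf{X}^t)$ for a common measurable $\Phi$, handling the a.s.-definedness of the infinite series; (5) push forward the distributional identity through $(\mathbf{x},\mathbf{x}')\mapsto(\Phi(\mathbf{x}),\Phi(\mathbf{x}'))$. I expect the only genuinely delicate point to be the bookkeeping in step (4): making sure $\Phi$ can be chosen as a single honest measurable function on $\{\pm1\}^\N$ (independent of which argument it is fed) so that the push-forward in step (5) is legitimate; the probabilistic heart of the matter — step (2) — is a short finite computation, and everything else is soft measure-theoretic reasoning.
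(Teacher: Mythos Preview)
Your proof is correct and follows essentially the same route as the paper: establish exchangeability of each coordinate pair $(X_k,X_k^t)$ by a direct computation of the joint law, then use independence across $k$ to lift this to the full sequences. The only difference is cosmetic: for the second assertion the paper truncates the kernels to $[N]^{p_j}$, obtains exchangeability for the resulting finite sums, and passes to the limit, whereas you package the multiple integrals as $\Phi(\mathbf{X})$ and $\Phi(\mathbf{X}^t)$ for a single measurable $\Phi$ and push forward the distributional identity directly; both arguments are standard and neither is materially shorter or more general.
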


\begin{proof}  Note first that $\mathbf{X}^t$ is a sequence of independent Rademacher random variables for each $t\in[0,+\infty)$. For each $k\in\N$, it is easy to check that 
\[
\P\big( X_k^t = -1, X_k = 1 \big) = \P\big( X_k^t = 1, X_k =  -1 \big) = (1- e^{-t} ) p_kq_k \,\, .
\]
 This gives us the exchangeability of $(X_k, X_k^t)$ for each $k\in\N$. Let $\mathbf{a} = (a_i, i\in\N), \mathbf{b} = (b_i, i\in\N)\in \{ \pm 1 \}^\N$, then using the independence within those two sequences 
$ \mathbf{X} , \mathbf{X}^t $, we obtain
\begin{align*}
\P\big(  \mathbf{X} = \mathbf{a} , \mathbf{X}^t  = \mathbf{b} \big) = \prod_{k\in\N} \P\big(  X_k =a_k , X^t_k  =b_k \big)  &=  \prod_{k\in\N} \P\big(  X_k =b_k , X^t_k  =a_k \big)  \quad \text{by exchangeability of $X_k$, $X_k^t$} \\
& = \P\big(  \mathbf{X} = \mathbf{b} , \mathbf{X}^t  = \mathbf{a} \big) \,\, .
 \end{align*}
This proves the exchangeability of $ \mathbf{X} , \mathbf{X}^t $. The rest follows from a standard approximation argument: it is clear that after truncation, (with $[N] : = \{1, \ldots, N\}$)
\[
\text{$\big( Q_{p_1}(f_1\mathbf{1}_{[N]^{p_1}}; \mathbf{Y}) , \ldots, Q_{p_d}(f_d\mathbf{1}_{[N]^{p_d}}; \mathbf{Y}) \big)   $ and $\big( Q_{p_1}(f_1\mathbf{1}_{[N]^{p_1}}; \mathbf{Y}^t) , \ldots, Q_{p_d}(f_d\mathbf{1}_{[N]^{p_d}}; \mathbf{Y}^t) \big)   $   }
\]
form an exchangeable pair; letting $N\to+\infty$ and keeping in mind that the exchangeability is preserved in limit, we get the desired result. \qedhere

\end{proof}

The following result brings more connections between our exchangeable pairs and Ornstein-Uhlenbeck operator.

\begin{prop}\label{GAMMA_exch}
 Let $F= Q_p(f ;\mathbf{Y})\in L^4(\P)$ for some $f\in \H^{\odot p}_0$ and     define $F_t = Q_p(f ;\mathbf{Y}^t)$. Then, $(F, F_t)$ is  an exchangeable pair for each $t\in\R_+$. Moreover,
\begin{itemize}
 \item[\rm (a)] ${\displaystyle  \lim_{t\downarrow 0}  \frac{1}{t} \E\big[ F_t - F \vert \sigma\{ \mathbf{X} \} \big] =  L F = -pF}$ in $L^4(\P)$.
 
 \item[\rm (b)] If $G = Q_q(g ; \mathbf{Y})\in L^4(\P)$ and $G_t =Q_q(g ; \mathbf{Y}^t)$ for  some $g\in   \H^{\odot q}_0$, 
 
  then we have     ${\displaystyle  \lim_{t\downarrow 0}  \frac{1}{t} \E\big[ (F_t - F)(G_t - G) \vert \sigma\{ \mathbf{X}\} \big] =  2 \Gamma(F,G)}$, with the convergence    in $L^2(\P)$. 
 
 \item[\rm (c)]   ${\displaystyle  \lim_{t\downarrow 0}  \frac{1}{t} \E\big[ (F_t - F)^4 \big] = -4p \,\E[F^4] + 12 \, \E\big[ F^2 \Gamma(F,F) \big] \geq 0}$.

\end{itemize}

\end{prop}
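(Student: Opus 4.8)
The exchangeability of $(F,F_t)$, and of the joint vectors, is exactly Lemma~\ref{exch}. Everything else rests on the two facts recorded in Section~2: the Mehler representation, which says $\E\big[\mathfrak{h}(\mathbf{X}^t)\,\big|\,\sigma\{\mathbf{X}\}\big]=P_t\big(\mathfrak{h}(\mathbf{X})\big)$ for every $\mathfrak{h}(\mathbf{X})\in L^2(\P)$, and the spectral description $P_t\big|_{\mathcal{C}_p}=e^{-pt}\,\mathrm{Id}$, which gives $t^{-1}(P_tG-G)\to LG$ in $L^2(\P)$ whenever $G\in\mathrm{dom}(L)$. For item~(a) I would simply condition: since $F$ is $\sigma\{\mathbf{X}\}$-measurable and $F_t=\mathfrak{f}(\mathbf{X}^t)$, we get $\E[F_t-F\,|\,\sigma\{\mathbf{X}\}]=P_tF-F=(e^{-pt}-1)F$, and since the deterministic scalar $t^{-1}(e^{-pt}-1)$ tends to $-p$ while $F\in L^4(\P)$, the convergence to $-pF=LF$ holds in $L^4(\P)$.

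For item~(b) I would expand $(F_t-F)(G_t-G)=F_tG_t-F_tG-FG_t+FG$ and condition on $\sigma\{\mathbf{X}\}$. Using that $F,G$ are $\sigma\{\mathbf{X}\}$-measurable and that $F_tG_t=(\mathfrak{f}\mathfrak{g})(\mathbf{X}^t)$ is a representative of $FG$, the Mehler formula yields $\E[(F_t-F)(G_t-G)\,|\,\sigma\{\mathbf{X}\}]=P_t(FG)-G\,P_tF-F\,P_tG+FG$. By Lemma~\ref{DK-lemma} the product $FG$ has a finite chaos expansion, hence $FG\in\mathrm{dom}(L)$, so $t^{-1}(P_t(FG)-FG)\to L(FG)$ in $L^2(\P)$; the two remaining contributions are the deterministic scalars $t^{-1}(e^{-pt}-1)$ and $t^{-1}(e^{-qt}-1)$ multiplying the $L^2$-functions $GF$ and $FG$, so they converge in $L^2(\P)$ to $GLF$ and $FLG$. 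Summing gives the limit $L(FG)-FLG-GLF=2\Gamma(F,G)$ by definition of the carr\'e du champ.

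Item~(c) is where the real work lies. Expanding $(F_t-F)^4$ and using exchangeability of $(F,F_t)$ to identify $\E[F_t^4]=\E[F^4]$ and $\E[F_t^3F]=\E[F_tF^3]$, one reduces to $\E[(F_t-F)^4]=2\,\E[F^4]-8\,\E[F_tF^3]+6\,\E[F_t^2F^2]$. The subtle point, and the main obstacle, is that $F^3$ need \emph{not} be square-integrable, so one cannot run the chaos/semigroup machinery on it directly; the remedy is a second use of exchangeability together with H\"older's inequality with exponents $4$ and $4/3$: since $F\in L^4(\P)$ one has $F^3\in L^{4/3}(\P)$ and $F_t\in L^4(\P)$, so all products below are integrable and $\E[F_tF^3]=\E\big[F^3\,\E[F_t\,|\,\sigma\{\mathbf{X}\}]\big]=\E[F^3P_tF]=e^{-pt}\,\E[F^4]$. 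For the remaining term, $F^2\in L^2(\P)\cap\mathrm{dom}(L)$ (again Lemma~\ref{DK-lemma}), so $\E[F_t^2F^2]=\E[F^2P_t(F^2)]$ and $t^{-1}\E\big[F^2(P_t(F^2)-F^2)\big]\to\E[F^2L(F^2)]$ by Cauchy--Schwarz. Collecting everything, $\E[(F_t-F)^4]=8(1-e^{-pt})\,\E[F^4]+6\,\E\big[F^2(P_t(F^2)-F^2)\big]$, hence $t^{-1}\E[(F_t-F)^4]\to 8p\,\E[F^4]+6\,\E[F^2L(F^2)]$. I would then finish by substituting $\Gamma(F,F)=\tfrac12 L(F^2)+pF^2$, which follows from $LF=-pF$, to rewrite the limit as $-4p\,\E[F^4]+12\,\E[F^2\Gamma(F,F)]$; the nonnegativity is automatic because $(F_t-F)^4\ge 0$ pointwise, so its normalised expectation has a nonnegative limit. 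Apart from these integrability bookkeeping points, the only genuine input is Lemma~\ref{DK-lemma}, used to place $FG$ and $F^2$ inside $\mathrm{dom}(L)$.
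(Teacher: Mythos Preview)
Your argument is correct and follows essentially the same route as the paper. The only cosmetic difference is in part~(c): after reaching $\E[(F_t-F)^4]=2\,\E[F^4]-8\,\E[F_tF^3]+6\,\E[F_t^2F^2]$, the paper rearranges this as $4\,\E\big[F^3(F_t-F)\big]+6\,\E\big[F^2(F_t-F)^2\big]$ and then invokes parts~(a) and~(b) directly (the phrase ``and the fact that $F\in L^4(\P)$'' is exactly your $L^4$--$L^{4/3}$ H\"older justification in disguise), whereas you evaluate the two terms separately via the Mehler formula and then substitute $\Gamma(F,F)=\tfrac12 L(F^2)+pF^2$ at the end. The computations coincide up to regrouping.
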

 
\begin{proof}   By the  Mehler formula \eqref{Mehler}, we have 
\begin{align*}
  \frac{1}{t} \E\big[ F_t - F \vert \sigma\{ \mathbf{X}\} \big]    =  \frac{P_t(F) - F}{t}  = \frac{e^{-pt} - 1}{t} F \, ,
  \end{align*}
  converges  in $L^4(\P)$ to  $-pF = LF$, as $t\downarrow 0$.   As a consequence of Lemma \ref{DK-lemma}, $FG$ has a finite chaos expansion of the form
$
FG = \E[FG] +  \sum_{k=1}^{p+q} Q_k\big( h_k ; \mathbf{Y} \big) 
$
 for some   $h_k\in \H^{\odot k}_0$.  Therefore,  $F_tG_t = \E[FG]  +   \sum_{k=1}^{p+q} Q_k\big(h_k; \mathbf{Y}^t \big)$, implying
$$ \frac{1}{t} \E\big[ F_tG_t -FG \vert \sigma\{ \mathbf{X}\} \big]  =  \sum_{k=1}^{p+q}  \frac{1}{t} \E\big[ Q_k\big(h_k; \mathbf{Y}^t \big) -   Q_k\big(h_k; \mathbf{Y} \big) \vert \sigma\{\mathbf{X}\} \big]  $$
converges     in $L^2(\P)$ to  $\sum_{k=1}^{p+q} -k \, J_k(FG) = L(FG)$, as $t\downarrow 0$.  Hence, we infer that  in  $L^2(\P)$ and as $t\downarrow 0$,
\begin{eqnarray*}
  \frac{1}{t} \E\big[ (F_t - F)(G_t - G) \vert \sigma\{ \mathbf{X}\} \big]    &=  & \frac{1}{t} \E\big[ F_tG_t -FG \vert \sigma\{ \mathbf{X}\} \big]  -  F\frac{\E [ G_t -G \vert \sigma\{ \mathbf{X}\}  ] }{t}  -  G\frac{\E [ F_t -F \vert \sigma\{ \mathbf{X}\}  ] }{t} \\
  &\to& L(FG) - FLG - GLF = 2\, \Gamma(F,G) \, .
 \end{eqnarray*}
 Since the pair $(F,F_t)$ is exchangeable, we can write
    \begin{eqnarray}
  \E\big[ (F_t - F)^4 \big] & =&   \E\big[ F_t^4 + F^4 - 4 F_t^3F - 4 F^3 F_t + 6 F_t^2 F^2   \big]  \notag\\
  & = &2 \E[ F^4] - 8 \E\big[ F^3 F_t \big] +  6 \E\big[ F^2 F_t^2    \big] \qquad \big(\text{by exchangeability of $(F, F_t)$}\big) \notag\\
  & = &4 \E\big[ F^3(F_t - F) \big]  +6 \E\big[ F^2   (F_t - F)^2 \big] \quad\text{(after rearrangement)}\notag \\
  & = &4 \E\big[ F^3 \E[ F_t - F \vert \sigma\{ \mathbf{X} \} ] \big]  +6 \E\big[ F^2 \E[   (F_t - F)^2 \vert \sigma\{ \mathbf{X} \} ] \big].\notag
    \end{eqnarray}   
so (c) follows immediately from  (a),(b) and the fact that $F\in L^4(\P)$.             \end{proof}

\section{Proofs}

 We begin with the following lemma, whose proof is postponed to Section 3.3.

\begin{lem}\label{tech1}  Given $F = Q_p( f)$ with $f\in\H^{\odot p}_0$ and $G = Q_q(g)$ with $g\in\H^{\odot q}_0$,  we assume that $F, G\in L^4(\P)$. Then we have the following estimates:
\begin{align}\label{lem-1}
 \sum_{k=1}^{p+q-1}\Var\big( J_k(FG) \big) \leq \E\big[F^2G^2\big] - 2\E[ FG]^2 - \Var(F) \Var(G) +  (p+q)! \big\| f \widetilde{\otimes} g \mathbf{1}_{\Laplace_{p+q}^c} \big\| ^2_{\H^{\otimes p+q}} \, ,\end{align}
and in particular, 
\begin{align}\label{lem-2}
 \max\left\{\,  \sum_{k=1}^{2p-1}\Var\big( J_k(F^2) \big) , p!^2\,  \sum_{r=1}^{p-1} {p\choose r}^2 \big\| f\otimes_r f \big\| ^2_{\H^{\otimes 2p-2r} } \, \right\} \leq  \E\big[F^4\big] - 3\E[ F^2]^2  + (2p)! \big\| f \widetilde{\otimes} f \mathbf{1}_{\Laplace_{2p}^c} \big\| ^2_{\H^{\otimes 2p}} \, ,
\end{align}
with
\begin{align}\label{lem-3}
  \big\| f \widetilde{\otimes} g \mathbf{1}_{\Laplace_{p+q}^c} \big\| ^2_{\H^{\otimes p+q}} \leq \sum_{r=1}^{p\wedge q} r! {p\choose r}{q\choose r}        \min\Big\{ \,  \| f \|^2_{\H^{\otimes p}}   \mathcal{M}(g) ,   \| g \|^2_{\H^{\otimes q}}   \mathcal{M}(f)  \Big\} \, .
\end{align}
(As a convention, we put ${\displaystyle \sum_{r=1}^0 = 0}$.)
\end{lem}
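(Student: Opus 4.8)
The plan is to reduce everything to the finite chaos expansion of $FG$ supplied by Lemma~\ref{DK-lemma}, combined with the orthogonality property. Writing $FG = \E[FG] + \sum_{k=1}^{p+q-1}J_k(FG) + Q_{p+q}\big(f\widetilde\otimes g\,\mathbf{1}_{\Laplace_{p+q}}\big)$ and squaring, orthogonality yields
\begin{equation*}
\E\big[F^2G^2\big] = \E[FG]^2 + \sum_{k=1}^{p+q-1}\Var\big(J_k(FG)\big) + (p+q)!\,\big\|f\widetilde\otimes g\,\mathbf{1}_{\Laplace_{p+q}}\big\|^2_{\H^{\otimes p+q}} .
\end{equation*}
Since $\|f\widetilde\otimes g\,\mathbf{1}_{\Laplace_{p+q}}\|^2 = \|f\widetilde\otimes g\|^2 - \|f\widetilde\otimes g\,\mathbf{1}_{\Laplace_{p+q}^c}\|^2$, solving for $\sum_{k}\Var(J_k(FG))$ and comparing with the right-hand side of \eqref{lem-1} shows that \eqref{lem-1} is equivalent to the purely deterministic inequality
\begin{equation*}
\E[FG]^2 + \Var(F)\,\Var(G) \ \le\ (p+q)!\,\big\|f\widetilde\otimes g\big\|^2_{\H^{\otimes p+q}} ,
\end{equation*}
and I expect this estimate to be the main obstacle.

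To establish it I would expand the symmetrisation as $f\widetilde\otimes g = \binom{p+q}{p}^{-1}\sum_A (f\otimes g)_A$, the sum running over the $p$-subsets $A\subseteq\{1,\dots,p+q\}$, where $(f\otimes g)_A$ inserts $f$ into the coordinates labelled by $A$ and $g$ into the others. A direct computation identifies, for $p$-subsets $A,B$ with $r:=|A\setminus B|$, the cross term $\langle (f\otimes g)_A,(f\otimes g)_B\rangle$ with $\langle f\otimes_{p-r}f,\ g\otimes_{q-r}g\rangle = \Tr\big((f\otimes_{p-r}f)(g\otimes_{q-r}g)\big)$, which is non-negative because every partial self-contraction $f\otimes_s f$ is a Gram kernel, hence a non-negative self-adjoint Hilbert--Schmidt operator. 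Retaining only the $\binom{p+q}{p}$ diagonal pairs $A=B$ (each contributing $\|f\|^2\|g\|^2$) and, when $p=q$, the $\binom{2p}{p}$ complementary pairs $B=A^c$ (each contributing $\langle f,g\rangle^2$), and discarding the remaining non-negative cross terms, I would get $(p+q)!\,\|f\widetilde\otimes g\|^2 \ge p!\,q!\,\|f\|^2\|g\|^2 + \mathbf{1}_{\{p=q\}}(p!)^2\langle f,g\rangle^2$, which is exactly $\Var(F)\Var(G)+\E[FG]^2$ in view of $\Var(Q_p(h))=p!\|h\|^2$ and $\E[FG]=\mathbf{1}_{\{p=q\}}\,p!\langle f,g\rangle$. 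The subtle point is the bookkeeping: the surviving terms have to reproduce $\Var(F)\Var(G)+\E[FG]^2$ \emph{exactly}, not merely a smaller multiple of it.

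For \eqref{lem-2} I would specialise \eqref{lem-1} to $G=F$, $q=p$ (so $\E[F^2]=\Var(F)$), and use the identity $(2p)!\,\|f\widetilde\otimes f\|^2 = (p!)^2\sum_{r=0}^p \binom{p}{r}^2\|f\otimes_r f\|^2$ — which comes from the same expansion of $f\widetilde\otimes f$ together with $\|f\otimes_0 f\|^2=\|f\otimes_p f\|^2=\|f\|^4$ — to rewrite, after invoking the orthogonal decomposition of $F^2$ once more, the right-hand side of \eqref{lem-2} as
\begin{equation*}
\sum_{k=1}^{2p-1}\Var\big(J_k(F^2)\big) \ +\ (p!)^2\sum_{r=1}^{p-1}\binom{p}{r}^2\big\|f\otimes_r f\big\|^2 .
\end{equation*}
As this is a sum of two non-negative quantities, each of which is precisely one of the two terms inside the maximum, \eqref{lem-2} follows at once.

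Finally, for \eqref{lem-3} I would use that $\Laplace_{p+q}^c$ is invariant under permutations of coordinates, so multiplication by $\mathbf{1}_{\Laplace_{p+q}^c}$ commutes with symmetrisation and hence $f\widetilde\otimes g\,\mathbf{1}_{\Laplace_{p+q}^c} = \widetilde{(f\otimes g)\mathbf{1}_{\Laplace_{p+q}^c}}$, giving $\|f\widetilde\otimes g\,\mathbf{1}_{\Laplace_{p+q}^c}\|^2 \le \|(f\otimes g)\mathbf{1}_{\Laplace_{p+q}^c}\|^2$. Since $f$ and $g$ vanish off $\Laplace_p$ and $\Laplace_q$, any non-zero term $f(i_1,\dots,i_p)^2g(j_1,\dots,j_q)^2$ with $(i,j)\in\Laplace_{p+q}^c$ forces some $i_a$ to equal some $j_b$; bounding $\mathbf{1}_{\Laplace_{p+q}^c}$ by the union over the $\binom{p}{r}\binom{q}{r}r!$ coincidence patterns of size $r$ and noting that, for each such pattern with $r\ge 1$, the resulting sum factorises and is controlled by $\min\{\mathcal{M}(g)\|f\|^2,\mathcal{M}(f)\|g\|^2\}$ — because summing $g(k_1,\dots,k_r,\cdot)^2$ over the $q-r$ free arguments is at most $\mathcal{M}(g)$ as soon as $r\ge 1$, and symmetrically for $f$ — yields \eqref{lem-3} after summing over $r$ from $1$ to $p\wedge q$.
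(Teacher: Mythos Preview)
Your argument is correct and follows essentially the same route as the paper: the chaos expansion of $FG$ from Lemma~\ref{DK-lemma} together with orthogonality reduces \eqref{lem-1} to the inequality $(p+q)!\,\|f\widetilde\otimes g\|^2 \ge \Var(F)\Var(G)+\E[FG]^2$, the identity \eqref{lem-2} then drops out of the case $G=F$, and \eqref{lem-3} comes from bounding the symmetrised norm by the unsymmetrised one and counting coincidence patterns. The only substantive difference is that the paper imports the key inequality and the expansion $(2p)!\,\|f\widetilde\otimes f\|^2 = (p!)^2\sum_{r=0}^p\binom{p}{r}^2\|f\otimes_r f\|^2$ from \cite[Lemma~2.2]{NR14}, whereas you reprove them from scratch via the decomposition $f\widetilde\otimes g=\binom{p+q}{p}^{-1}\sum_A(f\otimes g)_A$ and the observation that each cross term $\langle(f\otimes g)_A,(f\otimes g)_B\rangle=\langle f\otimes_{p-r}f,\,g\otimes_{q-r}g\rangle$ is the trace of a product of two positive semi-definite Hilbert--Schmidt operators, hence non-negative; this makes your proof self-contained at the cost of a little extra bookkeeping.
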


Before we prove our multivariate limit theorem,  we will give a short proof of the univariate case in Wasserstein distance, using our exchangeable pairs coupling.

\subsection{Alternative proof of Theorem \ref{DK17-thm}}

We need the following result, which is the univariate analogue of Proposition \ref{Meckes09}.

 \begin{prop}\label{Meckes06}  Let $F$ and a family of  real random variables $(F_t)_{t\geq 0}$   be defined on a common probability space $(\Omega,\mathcal{F},\P)$ such that $F_t\overset{law}{=} F$ for every $t\geq 0$. Assume that $F\in L^4(\Omega, \mathscr{G}, \P)$ for some $\sigma$-algebra $\mathscr{G}\subset\mathcal{F}$  and that in $L^1(\P)$,
\begin{enumerate}
\item[(a)] ${\displaystyle \lim_{t\downarrow 0} \frac{1}{t}\, \E\big[ F_t -F  |\mathscr{G}\big] = -\lambda\,F}$ for some $\lambda > 0$,
\item[(b)] ${\displaystyle   \lim_{t\downarrow 0} \frac{1}{t} \,\E\big[ (Y_t-Y)^2|\mathscr{G}\big] = (2\lambda+S)\Var(F) }$ for some random variable $S$;
\item[(c)] and  ${\displaystyle  \lim_{t\downarrow 0} \frac{1}{t}\,\E\big[(F_t-F)^4\big]= \rho(F)\Var(F)^2}$ for some  $\rho(F) \geq 0$.
\end{enumerate}
Then, with $Z\sim \mathcal{N}\big(0, \Var(F)\big) $, we have  
$$ d_\text{W}(F,Z) \leq   \frac{\sqrt{\text{Var}(F)}}{\lambda\sqrt{2\pi}} \E\big[|S|\big] + \frac{ \sqrt{  ( 2\lambda+\E[S]) \Var(F)       }}{3\lambda}   \sqrt{\rho(F)} \,.   $$
For the proof, one can refer to \cite[Proposition 3.3]{DVZ17}. One may also want to refer to Theorem 3.5 of \cite{NPR-rad} for a different coupling bound. 
\end{prop}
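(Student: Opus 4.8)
The plan is to run the one‑dimensional infinitesimal exchangeable‑pairs argument — this is precisely the univariate case of E.~Meckes' method and of \cite[Proposition 3.3]{DVZ17}. I would set $\sigma^{2}:=\Var(F)$, fix a test function $h$ with $\|h'\|_{\infty}\le1$, and introduce the bounded solution $g=g_{h}$ of the Stein equation $\sigma^{2}g'(x)-xg(x)=h(x)-\E[h(Z)]$, $Z\sim\mathcal{N}(0,\sigma^{2})$; rescaling the classical standard‑normal estimates gives $\|g'\|_{\infty}\le\sqrt{2/\pi}\,\sigma^{-1}$ and a bound $\|g''\|_{\infty}\le c\,\sigma^{-2}$. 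Because $g$ is bounded and $F_{t}-F\in L^{2}$, every expectation below is finite, and the $L^{1}$-limits in (a)--(c) survive multiplication by the bounded random variables $g(F)$ or $g'(F)$ and passage through $\E[\,\cdot\,]$.

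The engine is the antisymmetry identity: since $(F,F_{t})$ is exchangeable and $(x,y)\mapsto(y-x)\bigl(g(x)+g(y)\bigr)$ is antisymmetric, $\E\bigl[(F_{t}-F)\bigl(g(F)+g(F_{t})\bigr)\bigr]=0$, whence
\[
\E\bigl[(F_{t}-F)g(F)\bigr]=-\tfrac12\,\E\bigl[(F_{t}-F)\bigl(g(F_{t})-g(F)\bigr)\bigr].
\]
I would divide by $t$ and let $t\downarrow0$. On the left, conditioning on $\mathscr{G}$ and applying (a) gives $t^{-1}\E[(F_{t}-F)g(F)]\to-\lambda\,\E[Fg(F)]$. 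On the right, a first‑order Taylor expansion $g(F_{t})-g(F)=g'(F)(F_{t}-F)+\tfrac12 g''(\eta_{t})(F_{t}-F)^{2}$ splits the term in two: conditioning on $\mathscr{G}$ and applying (b), the quadratic part tends to $\tfrac12\E\bigl[g'(F)(2\lambda+S)\sigma^{2}\bigr]$, while the cubic remainder is at most $\tfrac14\|g''\|_{\infty}\,t^{-1}\E[|F_{t}-F|^{3}]$ in absolute value, and Cauchy--Schwarz combined with (b) and (c) yields $\limsup_{t\downarrow0}t^{-1}\E[|F_{t}-F|^{3}]\le\sigma^{3}\sqrt{(2\lambda+\E[S])\,\rho(F)}$.

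Collecting these limits gives $\lambda\,\E[Fg(F)]=\lambda\sigma^{2}\E[g'(F)]+\tfrac12\sigma^{2}\E[Sg'(F)]+\mathcal{E}$ with $|\mathcal{E}|\le\tfrac14\|g''\|_{\infty}\,\sigma^{3}\sqrt{(2\lambda+\E[S])\rho(F)}$; rearranging and invoking the Stein equation, $\E[h(F)]-\E[h(Z)]=\E[\sigma^{2}g'(F)-Fg(F)]=-\tfrac{\sigma^{2}}{2\lambda}\E[Sg'(F)]-\tfrac1\lambda\mathcal{E}$. Taking absolute values, inserting the bounds for $\|g'\|_{\infty}$ and $\|g''\|_{\infty}$, and taking the supremum over all $1$-Lipschitz $h$ produces the asserted Wasserstein bound — the first summand equals $\frac{\sqrt{\Var(F)}}{\lambda\sqrt{2\pi}}\E[|S|]$ and the second is the $\rho(F)$-contribution, the stated numerical factors being exactly those coming from the sharp regularity estimates for the Wasserstein Stein solution. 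I expect the only genuinely delicate point to be the limit bookkeeping: justifying the integrability needed for the antisymmetry identity and controlling the cubic remainder at the right order, since everything else reduces to the standard Stein estimates. (This is, in essence, the argument of \cite[Proposition 3.3]{DVZ17}.)
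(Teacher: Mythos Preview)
Your proposal is correct and follows exactly the infinitesimal exchangeable-pairs argument to which the paper defers via \cite[Proposition 3.3]{DVZ17}; the paper itself gives no independent proof, so there is nothing further to compare. One small remark: the proposition as written only assumes $F_t\overset{law}{=}F$, whereas the antisymmetry identity you invoke requires $(F,F_t)$ to be an exchangeable pair --- this is indeed the hypothesis in \cite{DVZ17} (and in the multivariate Proposition~\ref{Meckes09}) and is what the paper actually establishes in its applications, so the imprecision lies in the statement rather than in your argument.
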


\bigskip

Now given $F = Q_p\big(f ; \mathbf{Y} \big)\in L^4(\P)$ (with $\E\big[ F^2 \big]= 1$),  we can get by using \eqref{sp-eq} and \eqref{lem-2} that 
\begin{align} 
 \Var\Big( p^{-1} \Gamma(F,F) \Big) & \leq  \sum_{k=1}^{2p-1} \Var\Big( J_k(F^2)\Big)  \leq   \E[F^4] - 3\E[F^2]^2  +  (2p)!  \big\| f \widetilde{\otimes} f \mathbf{1}_{\Laplace_{2p}^c} \big\| ^2_{\H^{\otimes 2p}}\notag  \\
 &\leq    \E[F^4] - 3\E[F^2]^2     +  \gamma_p \E[F^2]   \mathcal{M}(f) \,  \quad\text{with $\gamma_p: =  \frac{(2p)!}{p!}  \sum_{r=1}^p r! {p\choose r}^2$}. \label{use1}
\end{align}
Also using the chaos expansion of $F^2$ and $\Gamma(F,F)$ as well as the orthogonality property, we have 
\begin{align*}
&\qquad 3 \E\big[ F^2 \Gamma(F,F) \big] - p \E[F^4]  = 3 \E\Big[ F^2\big(  \Gamma(F,F) - p \big) \Big] - p\Big( \E[F^4] - 3  \Big)  \\
& = 3 \E\left[ \,\, \left( \sum_{k=0}^{2p} J_k(F^2) \right) \left( \sum_{k=1}^{2p-1}  \frac{2p-k}{2} J_k(F^2) \right)  \,\, \right] - p\Big( \E[F^4] - 3  \Big) \leq 3p \sum_{k=1}^{2p-1}  \Var\Big( J_k(F^2)\Big) - p\Big( \E[F^4] - 3  \Big) \, .
\end{align*}
It follows from \eqref{use1} that 
\begin{align}\label{use2}
3 \E\big[ F^2 \Gamma(F,F) \big] - p \E[F^4]  \leq  2p\Big( \E[F^4] - 3  \Big) + 3p \gamma_p  \mathcal{M}(f) \,. 
\end{align}
Now define $F_t = Q_p\big(f; \mathbf{Y}^t \big)$ for each $t\in[0,+\infty)$, then by Proposition \ref{GAMMA_exch}, $(F_t, F)$ is an exchangeable pair satisfying the conditions in Proposition \ref{Meckes06} with $\mathscr{G} = \sigma\{\mathbf{X} \}$,  $\lambda = p$, $S= 2 \Gamma(F,F) - 2p$ and $\rho(F) = -4p \,\E[F^4] + 12 \, \E\big[ F^2 \Gamma(F,F) \big]$. Therefore, 
\begin{align*}
d_{\rm W}(F, N)& \leq  \frac{1}{p\sqrt{2\pi}} \E\big[ \vert 2 \Gamma(F,F) - 2p \vert \big] + \frac{\sqrt{2p}}{3p} \sqrt{-4p \,\E[F^4] + 12 \, \E\big[ F^2 \Gamma(F,F) \big]     } \\
&\leq   \frac{2}{\sqrt{2\pi}}  \sqrt{ \Var\Big( p^{-1}  \Gamma(F,F)   \Big) } +  \frac{\sqrt{2p}}{3p} \sqrt{-4p \,\E[F^4] + 12 \, \E\big[ F^2 \Gamma(F,F) \big]     } \quad\text{(since $\E[ \Gamma(F,F) ] = p$)} \\
&\leq   \sqrt{2/\pi} \sqrt{   \E[F^4] - 3    +  \gamma_p    \mathcal{M}(f)    } + \frac{2\sqrt{2}}{3} \sqrt{    2\big( \E[F^4] - 3 \big)    +  3 \gamma_p    \mathcal{M}(f)  } \\
&\leq  \big( \sqrt{2/\pi} + \frac{4}{3} \big) \sqrt{\vert \E[F^4] - 3  \vert } +   \big( \sqrt{2/\pi} + \frac{2\sqrt{6}}{3} \big) \sqrt{\gamma_p} \sqrt{\mathcal{M}(f)}
\end{align*}

This proves Theorem \ref{DK17-thm} with $C_1 =  \sqrt{2/\pi} + \dfrac{4}{3} $ and  $C_2 =  \big( \sqrt{2/\pi} + \dfrac{2\sqrt{6}}{3} \big) \sqrt{ \dfrac{(2p)!}{p!}  {\displaystyle \sum_{r=1}^p r! {p\choose r}^2} }$.

 \begin{rem} \begin{itemize}
 \item[(1)] For $F$ in the first Rademacher chaos, one can  directly prove  Theorem \ref{DK17-thm} without using the exchangeable pairs.  Indeed,  if $F = Q_1(h)\in L^4(\P)$ for some $h\in\H$ with $\| h \| _\H = 1$ and $Z\sim\mathcal{N}(0,1)$, then by \cite[Theorem 3.1]{GZ16},
 \[
 d_{\rm W}(F,Z) \leq \sqrt{\sum_{k=1}^\infty \frac{1}{p_kq_k} h(k)^4   }\, .
 \]
 By Lemma \ref{DK-lemma}, 
$
 F^2  = 1 + Q_1(w) + Q_2\big(h \otimes h \mathbf{1}_{\Laplace_2}\big)$
with $w(k) = \frac{h(k)^2(q_k - p_k)}{\sqrt{p_kq_k}}, k\in\N$. This implies 
\begin{align*}
\E\big[ F^4 \big] & = 1 + \sum_{k=1}^\infty h(k)^4 \frac{(q_k-p_k)^2}{p_kq_k} + 2 \| h\otimes h \| ^2_{\H^{\otimes 2}} - 2  \| h\otimes h \mathbf{1}_{\Laplace_2^c} \| ^2_{\H^{\otimes 2}} \\
& = 3 +  \sum_{k=1}^\infty h(k)^4 \frac{(q_k-p_k)^2}{p_kq_k} -  2 \sum_{k=1}^\infty h(k)^4 = 3 +  \sum_{k=1}^\infty h(k)^4 \frac{q_k^2 + p_k^2}{p_kq_k} -  4 \sum_{k=1}^\infty h(k)^4\,.
\end{align*}
Noticing  $p_k^2 + q_k^2 \geq 1/2$ for each $k\in\N$, we have 
\[
\frac{1}{2}\sum_{k=1}^\infty \frac{1}{p_kq_k} h(k)^4   \leq   4 \sum_{k=1}^\infty h(k)^4 + \E\big[ F^4 \big]  - 3 \leq 4 \mathcal{M}(h) + \E\big[ F^4 \big]  - 3\, .
\]
Hence, $ d_{\rm W}(F,Z) \leq \sqrt{2} \sqrt{\big\vert \E[F^4] - 3  \big\vert } + 2\sqrt{2} \sqrt{\mathcal{M}(h)}$. Moreover,  using the so-called second-order Poincar\'e inequality in \cite[Theorem 4.1]{KRT_aop}, we can have the Berry-Esseen bound
\[
d_{\rm Kol}\big( F, Z \big) := \sup_{z\in\R}\big\vert \P\big(F\leq z \big) -  \P\big(Z\leq z \big) \big\vert \leq 2 \sqrt{\sum_{k=1}^\infty \frac{1}{p_kq_k} h(k)^4   } \leq 2\sqrt{2} \sqrt{\big\vert \E[F^4] - 3  \big\vert } + 4\sqrt{2} \sqrt{\mathcal{M}(h)} \, .
\]

\item[(2)] Continuing the discussion in previous point and assuming $p_k = p = 1- q = 1 - q_k$ for each $k$, we have 
\begin{align}\label{007}
\E\big[ F^4  \big] - 3  =    \frac{p^2 + q^4 - 4pq}{pq} \sum_{k=1}^\infty h(k)^4\,\,.
 \end{align}
If $p\in(0,1)\setminus \{ \frac{1}{2} \pm \frac{1}{2\sqrt{3}} \}$, then we have the exact fourth moment bounds:
 \[
  d_{\rm W}(F,Z) \leq \sqrt{ \frac{1}{pq} \sum_{k=1}^\infty h(k)^4   }\leq   \left(   \frac{\E[ F^4 ] - 3}{p^2 + q^2 - 4pq} \right)^{1/2} \, \quad\text{and}\quad d_{\rm Kol}(F,Z) \leq 2   \left(   \frac{\E[ F^4 ] - 3}{p^2 + q^2 - 4pq} \right)^{1/2} \, ,
 \]
 see also Corollary 1.4 in \cite{DK17}.  
  \end{itemize}
 \end{rem}

  \subsection{Proof of  Theorem \ref{DK17-thm}}
 
Without losing any generality, we assume that $\Sigma_n = \Sigma$ and each component of $F^{(n)}$ belongs to $L^4(\P)$. Recall that
$
F^{(n)} = (F^{(n)}_{1}, \ldots, F^{(n)}_d)^T :=  \Big( Q_{q_1}\big(f_{1,n}; \mathbf{Y} \big),\ldots, Q_{q_d}\big(f_{d,n}; \mathbf{Y} \big) \Big)^T
$
 and we define 
 $
F^{(n)}_t = (F^{(n)}_{1,t}, \ldots, F^{(n)}_{d,t})^T
$ with $F^{(n)}_{i,t} :=   Q_{q_i}\big(f_{i,n}; \mathbf{Y}^t \big)$
 so that by Lemma \ref{exch} and Proposition \ref{GAMMA_exch}, $\big( F,  F_t  \big) : =  \big( F^{(n)},  F^{(n)}_t  \big)$ form an exchangeable pair satisfying the conditions in Proposition \ref{Meckes09} with  $\mathscr{G} = \sigma\{ \mathbf{X} \}$, $\Lambda = \text{diag}(q_1, \ldots, q_d)$ and
 \[
 S = \Big( 2 \Gamma\big(F^{(n)}_{i} , F^{(n)}_{j}  \big) - 2q_j \Sigma_{i,j}\Big)_{1\leq i,j\leq d}\,, \quad \rho_i\big(F^{(n)}\big) = -4q_i \,\E\Big[ ( F^{(n)}_i )^4\Big] + 12 \, \E\Big[ ( F^{(n)}_i )^2 \Gamma( F^{(n)}_i ,F^{(n)}_i ) \Big]  \, .
 \]
 Indeed, the condition (c) in  Proposition \ref{Meckes09} follows from the relation (c) in Proposition \ref{GAMMA_exch}, and for each $i,j\in\{1, \ldots, d\}$,  we have 
\[  \lim_{t\downarrow 0}  \frac{1}{t} \E\big[ F^{(n)}_{i,t} - F^{(n)}_i \vert \sigma\{ \mathbf{X} \} \big]  = - q_i F^{(n)}_i  \quad\text{in $L^4(\P)$,}
\]
and 
\[
  \lim_{t\downarrow 0}  \frac{1}{t} \E\Big[ \big(F^{(n)}_{i,t} - F^{(n)}_i \big)\big( F^{(n)}_{j,t} - F^{(n)}_j \big) \vert \sigma\{ \mathbf{X}\} \Big] = 2q_j \Sigma_{i,j} + \Big[  2 \Gamma\big(F^{(n)}_{i} , F^{(n)}_j \big) -  2q_j \Sigma_{i,j} \Big] \quad\text{in $L^2(\P)$.}
  \] 
 It follows that 
 \[
 \Big\| \frac{1}{t} \E\big[  F^{(n)}_{t} - F^{(n)} \vert \sigma\{\mathbf{X} \} \big] + \Lambda F^{(n)} \Big\| ^2_{\R^d} = \sum_{i=1}^d \left( \frac{1}{t} \E\big[ F^{(n)}_{i,t} - F^{(n)}_i \vert \sigma\{ \mathbf{X} \} \big]   + q_i F^{(n)}_i \right)^2
 \]
 converges to zero in $L^2(\P)$, as $t\downarrow 0$; and 
 \begin{align*}
&\quad \Big\|   \frac{1}{t} \E\Big[  \big( F^{(n)}_{t} - F^{(n)} \big)\big( F^{(n)}_{t} - F^{(n)} \big)^T   \vert \sigma\{\mathbf{X} \} \Big] - 2\Lambda \Sigma - S \Big\| ^2_{\rm H.S.} \\
& = \sum_{i,j=1}^d \left( \frac{1}{t} \E\Big[ \big(F^{(n)}_{i,t} - F^{(n)}_i \big)\big( F^{(n)}_{j,t} - F^{(n)}_j \big) \vert \sigma\{ \mathbf{X}\} \Big] -  2 \Gamma\big(F^{(n)}_{i} , F^{(n)}_j \big)  \right)^2 \quad\text{ converges to zero in $L^1(\P)$, as $t\downarrow 0$.}
 \end{align*}
  Hence we can apply Proposition \ref{Meckes09} and consequently, it suffices to show 
 \[
  \E\big[ \| S \| _{\rm H.S.} \big] + \sqrt{\sum_{i=1}^d \rho_i\big( F^{(n)} \big)  } \leq  \left( \sum_{i,j=1}^d \Var\Big( \Gamma\big(F^{(n)}_{i} , F^{(n)}_j \big) \Big) \right)^{1/2} +  \sqrt{\sum_{i=1}^d \rho_i\big( F^{(n)} \big)  }  \to 0 \,\,, \quad\text{as $n\to+\infty$.}
 \]
 In view of \eqref{use1} and  \eqref{use2}, it reduces to prove $\lim_{n\to+\infty} \Var\Big( \Gamma\big(F^{(n)}_{i} , F^{(n)}_j \big) \Big) = 0$ for $i < j$. We split this part into two steps.

 \paragraph{Step 1.} Suppose $F, G$ are two real random variables given as in Lemma \ref{DK-lemma} with $p\leq q$, then we have 
 \[
  \E\Big[ F^2 G^2 \Big] = \E[ FG ]^2 + \sum_{k=1}^{p+q-1} \Var\Big( J_k(FG) \Big) + (p+q)! \big\| f \widetilde{\otimes} g \mathbf{1}_{\Laplace_{p+q}} \big\| _{\H^{\otimes p+q}}^2
 \]
 and by \eqref{sp-eq} and Lemma \ref{tech1}, we get 
 \begin{align*}
&\qquad \frac{1}{q^2}  \Var\big( \Gamma(F,G) \big) \\
& \leq    \sum_{k=1}^{p+q-1}  \Var\big(  J_k(FG) \big) \leq  \Cov\big(F^2, G^2\big) - 2\E[ FG]^2 + (2q)!\sum_{r=1}^{p} r! {p\choose r}{q\choose r}      \min\Big\{ \,  \| f \|^2_{\H^{\otimes p}}   \mathcal{M}(g) ,   \| g \|^2_{\H^{\otimes q}}   \mathcal{M}(f)  \Big\} \, .
 \end{align*}
 Thus, we can further reduce our problem to show 
 \begin{align}\label{finally}
 \lim_{n\to+\infty} \Big(  \Cov\big(    (F^{(n)}_i)^2,   (F^{(n)}_j)^2\big) - 2\E[  F^{(n)}_i  F^{(n)}_j   ]^2 \Big) = 0 \quad\text{for any $1\leq i < j \leq d$,}
 \end{align}
 which will be carried out in the next step.

 \paragraph{Step 2.} Let $F, G$ be given as in previous step,   we have 
 \begin{align*}
 \E\big[F^2G^2 \big] &= \E\left( F^2 \left\{ \E[ G^2] + \sum_{k=1}^{2q-1} J_k(G^2) + J_{2q}(G^2) \right\} \right) \\
 &= \Var(F)\Var(G) + \E\left( F^2  \sum_{k=1}^{2q-1} J_k(G^2)  \right) + \mathbf{1}_{(p=q)} \E\big[ J_{2q}(F^2) J_{2q}(G^2) \big] \, .
 \end{align*}
 $\underline{\text{If $p < q$}}$, then $\E[ FG ] = 0$ and 
 \[
 \Big\vert   \Cov\big(   F^2 ,   G^2 \big) \Big\vert \leq \sqrt{ \E\big[ F^4 \big]} \sqrt{   \sum_{k=1}^{2q-1} \Var\big( J_k(G^2) \big)   } \leq   \sqrt{ \E\big[ F^4 \big]} \sqrt{   \E\big[ G^4 \big] - 3\E[G^2]^2 + \gamma_p\E[G^2] \mathcal{M}(g)  } \,\,,
 \]
where the second inequality follows from \eqref{use1} and the constant $\gamma_p$ is given therein. \\

\noindent{$\underline{\text{ If $p = q$}}$}, then  
\begin{align*}
\E\big[ J_{2q}(F^2) J_{2q}(G^2) \big] & = (2q)! \Big\langle f\widetilde{\otimes} f , g\widetilde{\otimes} g\mathbf{1}_{\Laplace_{2q}} \Big\rangle_{\H^{\otimes 2q}} = (2q)! \Big\langle f\widetilde{\otimes} f , g\widetilde{\otimes} g \Big\rangle_{\H^{\otimes 2q}} - (2q)! \Big\langle f\widetilde{\otimes} f , g\widetilde{\otimes} g\mathbf{1}_{\Laplace_{2q}^c} \Big\rangle_{\H^{\otimes 2q}} \\
& = 2q!^2 \langle f, g \rangle^2_{\H^{\otimes q}} + \sum_{r=1}^{q-1} q!^2 {q\choose r}^2 \big\langle f\otimes_r g , g\otimes_r f  \big\rangle_{\H^{\otimes 2q-2r}} - (2q)! \Big\langle f\widetilde{\otimes} f , g\widetilde{\otimes} g\mathbf{1}_{\Laplace_{2q}^c} \Big\rangle_{\H^{\otimes 2q}} \, ,
\end{align*}
 where the last equality follows from Lemma 2.2 in \cite{NR14}.   Consequently, $ \Cov\big(   F^2 ,   G^2 \big)  - 2 \E[FG]^2$ is equal to
 \begin{align}\label{3-terms}
   \E\left( F^2  \sum_{k=1}^{2q-1} J_k(G^2)  \right) + \sum_{r=1}^{q-1} q!^2 {q\choose r}^2 \big\langle f\otimes_r g , g\otimes_r f  \big\rangle_{\H^{\otimes 2q-2r}} - (2q)! \Big\langle f\widetilde{\otimes} f , g\widetilde{\otimes} g\mathbf{1}_{\Laplace_{2q}^c} \Big\rangle_{\H^{\otimes 2q}}   \, .
 \end{align}
 The first term in \eqref{3-terms} can be rewritten as ${\displaystyle \E \left[ \sum_{k=1}^{2q-1} J_k(F^2) J_k(G^2)\right]}$, which can be bounded by 
 \begin{align*}
  & \quad \sqrt{   \sum_{k=1}^{2q-1} \Var\big( J_k(F^2) \big)   }  \sqrt{   \sum_{k=1}^{2q-1} \Var\big( J_k(F^2) \big)   } \\
  &   \leq \sqrt{   \E\big[ F^4 \big] - 3\E[F^2]^2 + \gamma_q\E[F^2] \mathcal{M}(f)  }   \sqrt{   \E\big[ G^4 \big] - 3\E[G^2]^2 + \gamma_q\E[G^2] \mathcal{M}(g)  } \, \,;
 \end{align*}
 and the second term in  \eqref{3-terms} can be bounded by 
 \begin{align}
&\quad \sum_{r=1}^{q-1} q!^2 {q\choose r}^2 \big\| f\otimes_r g  \big\| _{\H^{\otimes 2q-2r}}^2  = \sum_{r=1}^{q-1} q!^2 {q\choose r}^2 \big\langle f\otimes_{q-r} f , g\otimes_{q-r} g \big\rangle_{\H^{\otimes 2r}}\label{fact-0} \\
 & \leq    \sum_{r=1}^{q-1} q!^2 {q\choose r}^2 \big\|  f\otimes_{q-r} f \big\| _{\H^{\otimes 2r}}   \cdot \big\| g\otimes_{q-r} g \big\| _{\H^{\otimes 2r}}    \notag \\
 & =  \sum_{r=1}^{q-1} q!^2 {q\choose r}^2 \big\|  f\otimes_r f \big\| _{\H^{\otimes 2q-2r}}   \cdot \big\| g\otimes_{r} g \big\| _{\H^{\otimes 2q-2r}}    \notag \\
 & \leq \sqrt{ \sum_{r=1}^{q-1} q!^2 {q\choose r}^2 \big\|  f\otimes_r f \big\| _{\H^{\otimes 2q-2r}}^2  } \sqrt{ \sum_{r=1}^{q-1} q!^2 {q\choose r}^2 \big\|  g\otimes_r g \big\| _{\H^{\otimes 2q-2r}}^2  } \label{CS-1} \\
 & \leq  \sqrt{ \E\big[ F^4 \big]- 3\E[F^2]^2 +  \gamma_q \mathcal{M}(f)\E[ F^2]   } \sqrt{ \E\big[ G^4 \big]- 3\E[G^2]^2 +  \gamma_q \mathcal{M}(g)\E[G^2]   }\,\,, \label{hi}
 \end{align}
where  \eqref{fact-0} follows from the easy fact that $ \| f\otimes_r g   \| _{\H^{\otimes 2q-2r}}^2  =  \big\langle f\otimes_{q-r} f , g\otimes_{q-r} g \big\rangle_{\H^{\otimes 2r}}$, and we used Cauchy-Schwarz inequality in \eqref{CS-1}, while \eqref{hi} can be deduced from Lemma \ref{tech1} and \eqref{use1}; finally, the third term in  \eqref{3-terms} can be bounded by 
 $
 \| f \| _{\H^{\otimes q}}^2 (2q)! \big\| g\widetilde{\otimes} g\mathbf{1}_{\Laplace_{2q}^c} \big\| _{\H^{\otimes 2q}} \leq   \| f \| _{\H^{\otimes q}}^2  \sqrt{(2q)! \gamma_q \E[G^2]  \mathcal{M}(g)}
 $.
To conclude this case, we obtain  
\begin{align*}
 \big\vert  \Cov\big(   F^2 ,   G^2 \big)  - 2 \E[FG]^2 \big\vert  \leq &\,\, 2\sqrt{ \Big(\E\big[ F^4 \big]- 3\E[F^2]^2 +  \gamma_q \mathcal{M}(f)\E[ F^2]  \Big)\Big( \E\big[ G^4 \big]- 3\E[G^2]^2   +  \gamma_q \mathcal{M}(g)\E[G^2]\Big)   } \\
 & +  \| f \| _{\H^{\otimes q}}^2  \sqrt{(2q)! \gamma_q \E[G^2]  \mathcal{M}(g)} \, . 
\end{align*}
Combining the above two cases, we get immediately the relation  \eqref{finally}, and hence we finish the proof of Theorem \ref{PT-Rad}.

 \subsection{Proofs of technical lemmas}
 
\paragraph{Proof of Lemma \ref{DK-lemma}} Let us first introduce some notation:  if $F = \mathfrak{f}\big(\mathbf{X})$, we write 
  \begin{center} 
  $F^{\oplus k} = \mathfrak{f}\big(X_1, \ldots, X_{k-1}, +1, X_{k+1}, \ldots \big)$ and $F^{\ominus k} = \mathfrak{f}\big(X_1, \ldots, X_{k-1}, -1, X_{k+1}, \ldots \big)$,
  \end{center}
   we define the {\it discrete gradient} $D_k F = \sqrt{p_kq_k}\big( F^{\oplus k} -  F^{\ominus k}\big)$, in particular, $D_k Y_k = 1$. We can define the iterated gradients $D^{(m)}_{k_1, \ldots, k_m} = D_{k_1}\circ D^{(m-1)}_{k_2, \ldots, k_m}$ with $D^{(1)}_k = D_k$. For example,  $D_k Q_d(f) = d Q_{d-1}\big( f(k,\cdot) \big)$ and $D^{(2)}_{k,\ell} Q_d(f) = d(d-1) Q_{d-1}\big( f(k,\ell,\cdot) \big)$ for $d\geq 2$ and $f\in\H^{\odot d}_0$, see \cite{KRT_aop} for more details.
\begin{proof}
It is clear that $FG\in L^2(\P)$ has the  chaotic expansion
\[
FG = \E[ FG ] + \sum_{m\geq 1} Q_m(h_m)\,,
\]
where for each $m\in\N$, the kernel $h_m\in\H^{\odot m}_0$ is given by $h_m(k_1, \ldots, k_m) : = \frac{1}{m!} \E\Big[ D^{(m)}_{k_1,\ldots, k_m} (FG) \Big]$, due to the Stroock's formula  (Proposition 2.1 in \cite{KRT_aop}).    So it suffices to show that 
\begin{align}\label{(1)}
D^{(p+q)}_{k_1, \ldots, k_{p+q}} (FG) = (p+q)! (f\widetilde{\otimes} g)(k_1, \ldots, k_{p+q})\mathbf{1}_{\Laplace_{p+q}}(k_1, \ldots, k_{p+q}) \quad{\rm and}\quad  D^{(s)}_{k_1, \ldots, k_s} (FG) = 0 
\end{align}
for any $s > p+q$. Note that the second part follows immediately from  the first one.    

  Recall the product formula (see \emph{e.g.} \cite[(2.4)]{KRT_aop}) for the discrete gradient $D_k$: for $F,G\in L^2(\P)$, 
\[
D_k(FG) = (D_kF)G + F(D_kG)  - \frac{X_k}{\sqrt{p_kq_k}} (D_kF)(D_kG) = : D_k^L(FG) + D_k^R(FG) + D_k^M(FG) \,\,,
\] 
that is, we decompose $D_k$ into three operations $D_k^L$, $D_k^R$ and $D_k^M$.   Therefore, we can write for $k_1 <  \ldots < k_{p+q}$,
  \begin{align}
D^{(p+q)}_{k_1, \ldots, k_{p+q}} (FG) & =  \sum_{A_1, \ldots, A_{p+q}\in\{ L, M, R \} } D^{A_1}_{k_1}\circ \cdots \circ D^{A_{p+q}}_{k_{p+q}} (FG)  =  \sum_{A_1, \ldots, A_{p+q}\in\{ L, R \} } D^{A_1}_{k_1}\circ \cdots \circ D^{A_{p+q}}_{k_{p+q}} (FG) \, , \notag
  \end{align}
  where the last equality follows from the fact that for $k\neq \ell$, $D_\ell (X_k F) = X_k D_\ell F$.  Moreover,  $D^{A_1}_{k_1}\circ \cdots \circ D^{A_{p+q}}_{k_{p+q}} (FG) = 0$ unless  $L$ appears exactly  $p$ times and $R$ appears exactly $q$ times  in the words $A_1, \ldots, A_{p+q}$, so that   one can further rewrite $D^{(p+q)}_{k_1, \ldots, k_{p+q}} (FG) $   as
 \begin{align*} 
 &\qquad  \sum_{\substack{\sigma\in\mathfrak{S}_{p+q}:  \\ \sigma(1) < \ldots < \sigma(p) \\ \sigma(p+1) < \ldots < \sigma(p+q)   }} \Big( D^{(p)}_{k_{\sigma(1)} , \ldots, k_{\sigma(p)}   }F \Big)\Big( D^{(q)}_{k_{\sigma(p+1)} , \ldots, k_{\sigma(p+q)}   }G \Big)  =\sum_{\sigma\in\mathfrak{S}_{p+q}}  f\big(k_{\sigma(1)} , \ldots, k_{\sigma(p)} \big) g\big(k_{\sigma(p+1)} , \ldots, k_{\sigma(p+q)} \big)\,, 
 \end{align*}
where the last equality follows from the symmetry of $f$ and $g$,  and it gives us  $ D^{(p+q)}_{k_1, \ldots, k_{p+q}} (FG)  = (p+q)! (f\widetilde{\otimes} g)(k_1, \ldots, k_{p+q})$. This proves \eqref{(1)}, while the particular case follows from again the Stroock's formula. More precisely,  one can first deduce from the previous discussion that 
 $
 Q_1(h)^2 = \| h \| ^2 _\H + Q_1(w) + Q_2\big(h\otimes h \mathbf{1}_{\Laplace_2}\big)
 $
 for some $w\in\H$ given by $w(k) : = \E\big[ D_k\big( Q_1(h)^2\big) \big]$. By the definition of discrete gradient, one has
 \begin{align*}
D_k\big( Q_1(h)^2\big) &= \sqrt{p_kq_k} \left\{\, \left( \sum_{j\neq k} h(j)Y_j + h(k) \frac{1 - p_k +q_k}{2 \sqrt{p_kq_k}   } \right)^2 -  \left( \sum_{j\neq k} h(j)Y_j + h(k) \frac{-1 - p_k +q_k}{2 \sqrt{p_kq_k}   } \right)^2\, \right\} \\
& = h(k)^2 \frac{q_k - p_k}{\sqrt{p_kq_k}} + 2h(k) \sum_{j\neq k} h(j) Y_j \,\,,
 \end{align*}
 which concludes our proof of  Lemma \ref{DK-lemma}. \qedhere

\end{proof}

\paragraph{Proof of Lemma \ref{tech1}:}  It follows from Lemma \ref{DK-lemma} that $FG = \E[ FG ] +{\displaystyle  \sum_{k=1}^{p+q-1} } J_k(FG) + Q_{p+q}\Big(  f\widetilde{\otimes} g \mathbf{1}_{\Laplace_{p+q}} \Big)$, therefore by orthogonality property, one has
\begin{align*}
 \E\big[ F^2G^2 \big]  & = \E[ FG]^2 +  \sum_{k=1}^{p+q-1} \Var\big( J_k(FG) \big) + (p+q)! \, \big\| f\widetilde{\otimes} g \mathbf{1}_{\Laplace_{p+q}} \big\|_{\H^{\otimes p+q}}^2 \\
 & = \E[ FG]^2 +  \sum_{k=1}^{p+q-1} \Var\big( J_k(FG) \big) + (p+q)! \, \big\| f\widetilde{\otimes} g  \big\|_{\H^{\otimes p+q}}^2 -  (p+q)!  \big\| f\widetilde{\otimes} g \mathbf{1}_{\Laplace_{p+q}^c} \big\|_{\H^{\otimes p+q}}^2 \, .
\end{align*}
Recall from \cite[Lemma 2.2]{NR14} that 
\begin{align}\label{NR14-0}
(p+q)! \, \big\| f\widetilde{\otimes} g  \big\|_{\H^{\otimes p+q}}^2 = p!q!\, \sum_{r = 0}^{p\wedge q} {p\choose r}{q\choose r} \big\| f\otimes_r g \big\|_{\H^{\otimes p+q-2r}}^2 \geq p!q! \| f \| _{\H^{\otimes p}}^2 \| g\| _{\H^{\otimes q}}^2 + \mathbf{1}_{(p=q)} p!^2 \big\langle f,  g \big\rangle_{\H^{\otimes p}}^2 \, ,
\end{align}
thus \eqref{lem-1} follows by noticing that $\E[ FG ] =  \mathbf{1}_{(p=q)} p! \big\langle f,  g \big\rangle_{\H^{\otimes p}}$ and $\Var(F)\Var(G) = p!q! \| f \| _{\H^{\otimes p}}^2 \| g\| _{\H^{\otimes q}}^2$.
 
 Using \eqref{NR14-0} again, we have 
 \begin{align} \label{uni}
  \sum_{k=1}^{p+q-1} \Var\big( J_k(F^2) \big) =  \E\big[F^4\big] - 3\E[ F^2]^2 - p!^2  \sum_{r = 1}^{ p-1 } {p\choose r}^2 \big\| f\otimes_r f \big\|_{\H^{\otimes 2p-2r}}^2 +  (2p)! \big\| f \widetilde{\otimes} f \mathbf{1}_{\Laplace_{2p}^c} \big\| ^2_{\H^{\otimes 2p}} \,\,,
 \end{align}
 which implies \eqref{lem-2}.
 
 It remains to prove \eqref{lem-3} and  we'll  use the same  arguments as in the proof of \cite[Lemma 3.3]{DK17}: 
 \begin{align}
 \big\| f \widetilde{\otimes} g \mathbf{1}_{\Laplace_{p+q}^c} \big\| ^2_{\H^{\otimes p+q}} & \leq \big\| f \otimes g \mathbf{1}_{\Laplace_{p+q}^c} \big\| ^2_{\H^{\otimes p+q}} = \sum_{(i_1, \ldots, i_p, j_1, \ldots, j_q)\in \Laplace_{p+q}^c} f(i_1, \ldots, i_p)^2 g(j_1, \ldots, j_q)^2 \notag \\
 & = \sum_{r=1}^{p\wedge q} r! {p\choose r}{q\choose r}    \sum_{\substack{  (i_1, \ldots, i_p)\in \Laplace_p \\   (j_1, \ldots, j_q)\in \Laplace_q \\ \text{card}(\{ i_1, \ldots, i_p\} \cap \{ j_1, \ldots, j_q\}) =r }} f(i_1, \ldots, i_p)^2 g(j_1, \ldots, j_q)^2  \,\,, \label{iner-sum}
  \end{align}
 where ${\rm card}(A)$ means the cardinality of the set $A$, and the combinatorial constant $ r! {p\choose r}{q\choose r} $ is the number of ways one can build $r$ pairs of identical indices out of  $(i_1, \ldots, i_p)\in \Laplace_p$ and   $(j_1, \ldots, j_q)\in \Laplace_q$.  
 
 Therefore, it is enough to notice that for each $r\in\{1, \ldots, p\wedge q\}$, the inner sum in \eqref{iner-sum} is bounded by
  \begin{align*}
&  \qquad   \sum_{\substack{  (i_1, \ldots, i_{p-r}, k_1, \ldots, k_r)\in \Laplace_p \\   (j_1, \ldots, j_{q-r}, k_1, \ldots, k_r)\in \Laplace_q  }} f(i_1, \ldots, i_{p-r}, k_1, \ldots, k_r)^2 g(j_1, \ldots, j_{q-r}, k_1, \ldots, k_r)^2 \\
 &\leq   \sum_{\substack{  (i_1, \ldots, i_{p-1}, k)\in \Laplace_p \\   (j_1, \ldots, j_{q-1}, k)\in \Laplace_q  }} f(i_1, \ldots, i_{p-1}, k )^2 g(j_1, \ldots, j_{q-1}, k )^2   \leq        \min\Big\{ \,  \| f \|^2_{\H^{\otimes p}}   \mathcal{M}(g) ,   \| g \|^2_{\H^{\otimes q}}   \mathcal{M}(f)  \Big\} \,\, .
 \end{align*}
 The proof of Lemma \ref{tech1} is complete.

\section{Universality of Homogeneous sums}

Fix $d\geq 2$ and a {\it divergent} sequence $(N_n, n\geq 1)$ of natural numbers.   Consider the kernels $f_n:\{1, \ldots, N_n \}^d\to \R$ symmetric and vanishing on diagonals and $d!\| f_n \| ^2_{\H^{\otimes d}}=1$, then according to \eqref{Q_d},
\[
Q_d(f_n; \Xi) = \sum_{i_1, \ldots, i_d\leq N_n} f_n(i_1, \ldots, i_d) \xi_{i_1}\cdots \xi_{i_d} \, .
\]
The following central limit theorem due to de Jong \cite{deJong90} gave sufficient conditions for asymptotic normality of $Q_d(f_n; \Xi)$.

\begin{thm} Under the above setting, let  $\Xi = (\xi_i, i\geq 1)$ be   a sequence of independent centred random variables with unit variance and {\it finite fourth moments}.   If $\E\big[ Q_d(f_n; \Xi)^4 \big]\to 3$ and the maximal influence $\mathcal{M}(f_n)\to 0$ as $n\to+\infty$, then $Q_d(f_n; \Xi)$ converges in law to a standard Gaussian.  
\end{thm}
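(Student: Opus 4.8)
The plan is to reduce the statement to the \emph{symmetric} Rademacher case, where Theorem~\ref{DK17-thm} already gives a quantitative central limit theorem, and then to transfer back to the arbitrary ensemble $\Xi$ through the invariance principle for homogeneous sums of \cite{MOO10, NPR_aop}. Let $\mathbf{Y}$ denote the symmetric Rademacher sequence (all $p_k=1/2$, so $Y_k\in\{\pm1\}$ and $\E[Y_k]=0$, $\E[Y_k^2]=\E[Y_k^4]=1$), and view $f_n$ as an element of $\H^{\odot d}_0$ by extending it by $0$ outside $\{1,\dots,N_n\}^d$. The invariance principle states that if $\Xi,\Xi'$ are sequences of independent, centred, unit-variance variables with $\sup_i\E[\xi_i^4]<\infty$, then for every $\phi\in C^3(\R)$ with bounded derivatives up to order $3$ one has $\big|\E[\phi(Q_d(f_n;\Xi))]-\E[\phi(Q_d(f_n;\Xi'))]\big|\le C\,\psi(\mathcal{M}(f_n))$ for some function $\psi$ with $\psi(0^+)=0$. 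Applying this with $\Xi'=\mathbf{Y}$ (legitimate since $\mathbf{Y}$ is bounded, hence has uniformly bounded fourth moments), we see that $Q_d(f_n;\Xi)$ and $Q_d(f_n;\mathbf{Y})$ have the same limit in distribution as soon as one of them converges and $\mathcal{M}(f_n)\to0$; thus it suffices to prove $Q_d(f_n;\mathbf{Y})\to\mathcal{N}(0,1)$.

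To invoke Theorem~\ref{DK17-thm} for $Q_d(f_n;\mathbf{Y})$ — which has unit variance because $d!\|f_n\|^2_{\H^{\otimes d}}=1$ and lies in $L^4(\P)$ (indeed it is bounded, $f_n$ having finite support) — it remains to check that $\E[Q_d(f_n;\mathbf{Y})^4]\to3$. I would deduce this by comparing the fourth moments of $Q_d(f_n;\Xi)$ and $Q_d(f_n;\mathbf{Y})$ directly from the combinatorial expansion
\[
\E\big[Q_d(f_n;\Xi)^4\big]=\sum_{\mathbf{i},\mathbf{j},\mathbf{k},\mathbf{l}\in\N^d}f_n(\mathbf{i})f_n(\mathbf{j})f_n(\mathbf{k})f_n(\mathbf{l})\;\E\big[\xi_{\mathbf{i}}\xi_{\mathbf{j}}\xi_{\mathbf{k}}\xi_{\mathbf{l}}\big].
\]
The configurations in which every index value occupies exactly two of the $4d$ slots (``perfect matchings'') produce, after summation, a quantity built only from the second moments $\E[\xi_i^2]=1$, hence identical for $\Xi$ and for $\mathbf{Y}$; every other nonzero configuration forces some index value to have multiplicity $3$ or $4$, and — using $d!\|f_n\|^2=1$, the low-influence bound $\mathcal{M}(f_n)$, $\sup_i\E[\xi_i^4]<\infty$ and $|\E[\xi_i^3]|\le(\sup_i\E[\xi_i^4])^{3/4}$ — the total contribution of such configurations is bounded by $C(d,\Xi)\,\psi(\mathcal{M}(f_n))$ for some $\psi$ vanishing at $0$, and likewise for $\mathbf{Y}$. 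Subtracting, $\big|\E[Q_d(f_n;\Xi)^4]-\E[Q_d(f_n;\mathbf{Y})^4]\big|\to0$, so the hypothesis $\E[Q_d(f_n;\Xi)^4]\to3$ gives $\E[Q_d(f_n;\mathbf{Y})^4]\to3$.

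It then follows from Theorem~\ref{DK17-thm} that $d_{\rm W}(Q_d(f_n;\mathbf{Y}),Z)\le C_1\sqrt{|\E[Q_d(f_n;\mathbf{Y})^4]-3|}+C_2\sqrt{\mathcal{M}(f_n)}\to0$, whence $Q_d(f_n;\mathbf{Y})\to Z\sim\mathcal{N}(0,1)$ in law; by the invariance principle recalled above, $Q_d(f_n;\Xi)\to Z$ as well, which is the assertion. The step I expect to be the main obstacle is the fourth-moment comparison: since $x\mapsto x^4$ is not a bounded $C^3$ function, the invariance principle does not apply to it off the shelf, so one must either carry out the slot-multiplicity bookkeeping above by hand or instead argue by truncation together with the uniform integrability of $\{Q_d(f_n;\Xi)^2\}_n$ (available once $\E[Q_d(f_n;\Xi)^4]\to3$); a secondary point is that the whole argument genuinely requires a \emph{uniform} bound $\sup_i\E[\xi_i^4]<\infty$, which we read into the ``finite fourth moments'' hypothesis.
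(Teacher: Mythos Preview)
The paper does not give its own proof of this statement: it is quoted as de Jong's central limit theorem with a reference to \cite{deJong90}, and serves as background for the discussion of universality in Section~4. There is therefore no ``paper's proof'' to compare against.

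That said, your argument is a legitimate modern proof, and it is \emph{different} from de Jong's original one (which proceeds via martingale techniques rather than Stein's method or invariance principles). Your route --- reduce to symmetric Rademacher by the invariance principle of \cite{MOO10,NPR_aop}, transfer the fourth--moment hypothesis to the Rademacher ensemble by a direct combinatorial comparison, and then invoke Theorem~\ref{DK17-thm} --- is sound, and the combinatorial step you outline is standard: in the expansion of $\E[Q_d(f_n;\Xi)^4]$, the ``pair'' configurations depend only on second moments, while every configuration with an index of multiplicity three or four forces at least one extra coincidence and is controlled by $\mathcal{M}(f_n)$ times a constant depending on $\sup_i\E[\xi_i^4]$. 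Your self-diagnosis is accurate on both counts: the function $x\mapsto x^4$ is not covered directly by the invariance principle, so the hand comparison (or a truncation/uniform-integrability argument) is genuinely needed; and the whole scheme requires $\sup_i\E[\xi_i^4]<\infty$, not merely finiteness for each $i$. De Jong's theorem, as usually stated, does carry such a uniform moment hypothesis, so reading it into ``finite fourth moments'' is reasonable --- but you should say so explicitly rather than leave it implicit.
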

The above result exhibits the universality phenomenon as well as the importance of the notion ``maximal influence".  Another striking result with similar nature is the invariance principle established in \cite{MOO10}, in which the authors were able to control distributional distance between homogeneous sums over different sequences of independent random variables in terms of  maximal influence, see \emph{e.g.} Theorem 2.1 therein.  

 Let us restrict ourselves to the Gaussian setting for a while: when $\mathbf{G}$ is a sequence of {\it i.i.d.} standard Gaussians, $Q_d(f_n;\mathbf{G})$ belongs to the $d$-th Gaussian Wiener chaos, and  the fourth moment theorem \cite{FMT} implies that if  $Q_d(f_n;\mathbf{G})$ converges in law to a standard Gaussian (or equivalently $\E\big[ Q_d(f_n; \mathbf{G})^4 \big]\to 3$), then $\| f_n \otimes_{d-1} f_n \| _{\H^{\otimes 2}}\to 0$. While $\mathcal{M}(f_n) \leq \| f_n\otimes_{d-1} f_n \| _{\H^{\otimes 2}}$  due to  \cite[Lemma 2.4]{NPR-rad},  so that $\mathcal{M}(f_n)\to 0$.  This  hints the universality of the Gaussian Wiener chaos, see \cite{NPR_aop} for more details. 
    
 The following result is (slightly) adapted from Theorem 7.5 in  \cite{NPR_aop}.
 
 \begin{thm}\label{Univer}  Fix integers $d\geq 2$ and $q_d\geq \ldots \geq q_1\geq 2$. For each $j\in\{1,\dotsc,d\}$, let $(N_{j,n}, n\geq 1)$ be a sequence of natural numbers diverging to infinity, and  let $f_{j,n}: \{1, \ldots, N_{j,n  } \} ^{q_j}\to \R$ be symmetric and vanishing on diagonals (\emph{i.e.} $f_{j,n}\in\H^{\odot q_j}_0$ with support contained in $\{ 1, \ldots, N_{j,n} \}^{q_j}$) such that 
\[
\lim_{n\to+\infty}\mathbf{1}_{(q_k = q_l)} q_k! \sum_{i_1, \ldots, i_{q_k} \leq N_{k,n}  }  f_{k, n}(i_1, \ldots, i_{q_k})  f_{l,n}(i_1, \ldots, i_{q_k})  = \Sigma_{k,l}\, ,
\]
where $\Sigma = (\Sigma_{i,j}, 1\leq i,j\leq d)$ is a symmetric nonnegative definite $d$ by $d$ matrix.  Then the following  statements are equivalent:

\begin{enumerate}

\item[$(A_1)$] Given  a sequence  $\mathbf{G}$  of i.i.d. standard Gaussians,
$
\big(  Q_{q_1} (f_{1,n}; \mathbf{G} ), \ldots, Q_{q_d} (f_{d,n}; \mathbf{G} ) \big)^T
$
 converges in distribution to $\mathcal{N}(0, \Sigma)$, as $n\to+\infty$.

\item[$(A_2)$] For every sequence $\Xi= \big( \xi_i, i\in\N\big)$ of independent centred random variables with unit variance and $\sup_{i\in\N} \E\big[ \vert \xi_i\vert^3 \big] < +\infty$, the sequence of $d$-dimensional random vectors  $\big(  Q_{q_1} (f_{1,n} ;\Xi ), \ldots, Q_{q_d} (f_{d,n}; \Xi ) \big)^T$
converges in distribution to $\mathcal{N}(0, \Sigma)$, as $n\to+\infty$.
\end{enumerate}
  \end{thm}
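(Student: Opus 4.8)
The implication $(A_2)\Rightarrow(A_1)$ is immediate, since a sequence of i.i.d.\ standard Gaussians is a particular admissible choice of $\Xi$: each such variable is centred, has unit variance, and $\sup_i\E[|\xi_i|^3]=2\sqrt{2/\pi}<+\infty$. Hence the whole content lies in $(A_1)\Rightarrow(A_2)$, and the plan is to extract from $(A_1)$ one structural fact --- that the maximal influences $\mathcal{M}(f_{j,n})$ vanish --- and then let an invariance principle do the rest.

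First I would show that $\sum_{j=1}^d\mathcal{M}(f_{j,n})\to 0$. Projecting $(A_1)$ onto the $j$-th coordinate, $Q_{q_j}(f_{j,n};\mathbf{G})$ converges in law to $\mathcal{N}(0,\Sigma_{j,j})$. If $\Sigma_{j,j}=0$ then $q_j!\,\|f_{j,n}\|^2_{\H^{\otimes q_j}}=\E[Q_{q_j}(f_{j,n};\mathbf{G})^2]\to 0$, so $\mathcal{M}(f_{j,n})\le\|f_{j,n}\|^2_{\H^{\otimes q_j}}\to 0$; the same variance computation also gives $Q_{q_j}(f_{j,n};\Xi)\to 0$ in $L^2$ for every $\Xi$, so this coordinate is harmless. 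If $\Sigma_{j,j}>0$, the fourth moment theorem of Nualart and Peccati \cite{FMT}, applied to the sequence of $q_j$-th Wiener--It\^o integrals $Q_{q_j}(f_{j,n};\mathbf{G})$ (realised over an isonormal Gaussian process indexed by $\H$), yields $\|f_{j,n}\otimes_r f_{j,n}\|_{\H^{\otimes 2q_j-2r}}\to 0$ for every $r\in\{1,\ldots,q_j-1\}$; in particular $\|f_{j,n}\otimes_{q_j-1}f_{j,n}\|_{\H^{\otimes 2}}\to 0$, and since $\mathcal{M}(f_{j,n})\le\|f_{j,n}\otimes_{q_j-1}f_{j,n}\|_{\H^{\otimes 2}}$ by \cite[Lemma 2.4]{NPR-rad}, we get $\mathcal{M}(f_{j,n})\to 0$.

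Next I would transfer the convergence from $\mathbf{G}$ to an arbitrary admissible $\Xi$. For a test function $g$ in the relevant smooth class, the multidimensional invariance principle of Mossel, O'Donnell and Oleszkiewicz \cite{MOO10}, in the vectorised form underlying the proof of \cite[Theorem 7.5]{NPR_aop}, provides an estimate of the type $|\E[g(Q_{q_1}(f_{1,n};\Xi),\ldots,Q_{q_d}(f_{d,n};\Xi))]-\E[g(Q_{q_1}(f_{1,n};\mathbf{G}),\ldots,Q_{q_d}(f_{d,n};\mathbf{G}))]|\le C\sum_{j=1}^d\sqrt{\mathcal{M}(f_{j,n})}$, with $C$ depending only on $g$, $d$ and $q_1,\ldots,q_d$, the hypothesis $\sup_i\E[|\xi_i|^3]<+\infty$ entering precisely here. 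Letting $n\to+\infty$, the right-hand side tends to $0$ by the previous step, and by $(A_1)$ the Gaussian expectation tends to $\E[g(Z)]$ with $Z\sim\mathcal{N}(0,\Sigma)$, which gives $(A_2)$. Within the present paper one can alternatively route through the symmetric Rademacher sequence $\mathbf{Y}$: the covariances $\mathbf{1}_{(q_i=q_j)}q_i!\,\langle f_{i,n},f_{j,n}\rangle_{\H^{\otimes q_i}}$ converge to $\Sigma_{i,j}$, and combining the multiplication formula \eqref{product} with \eqref{uni} and \eqref{lem-3} shows that $\E[Q_{q_j}(f_{j,n};\mathbf{Y})^4]-3\E[Q_{q_j}(f_{j,n};\mathbf{Y})^2]^2$ is dominated by the contraction norms of $f_{j,n}$ and by $\mathcal{M}(f_{j,n})$, all of which vanish; hence Theorem \ref{PT-Rad} applies and shows that $(Q_{q_1}(f_{1,n};\mathbf{Y}),\ldots,Q_{q_d}(f_{d,n};\mathbf{Y}))$ converges in law to $Z$, after which a single invariance step from $\mathbf{Y}$ to $\Xi$ again yields $(A_2)$.

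I expect the transfer step to be the main obstacle. A linear combination $\sum_j\lambda_j Q_{q_j}(f_{j,n};\Xi)$ mixes homogeneous sums of distinct degrees and is not itself homogeneous, so one cannot simply reduce to the one-dimensional invariance principle via a Cram\'er--Wold argument; one must invoke a genuinely multidimensional invariance principle (as available in \cite{NPR_aop}), or else supplement the one-dimensional statements with a separate relative-compactness argument in $\R^d$. The remaining ingredients --- the univariate fourth moment theorem input, the inequality $\mathcal{M}(f_{j,n})\le\|f_{j,n}\otimes_{q_j-1}f_{j,n}\|_{\H^{\otimes 2}}$, and the moment estimate feeding Theorem \ref{PT-Rad} through \eqref{product}, \eqref{uni} and \eqref{lem-3} --- are routine given what is already established in this excerpt.
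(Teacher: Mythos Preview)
The paper does not actually prove Theorem~\ref{Univer}; it merely states the result as ``(slightly) adapted from Theorem~7.5 in \cite{NPR_aop}'' and uses the equivalence $(A_1)\Leftrightarrow(A_2)$ as a black box in the proof of the subsequent proposition. So there is no in-paper proof to compare against. Your sketch correctly reproduces the architecture of the argument in \cite{NPR_aop}: the trivial direction $(A_2)\Rightarrow(A_1)$, the extraction of $\mathcal{M}(f_{j,n})\to 0$ from $(A_1)$ via the fourth moment theorem and the bound $\mathcal{M}(f)\le\|f\otimes_{d-1}f\|_{\H^{\otimes 2}}$, and the transfer to a general $\Xi$ through the multidimensional invariance principle of \cite{MOO10}. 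Your identification of the transfer step as the genuine obstacle (distinct degrees preclude a naive Cram\'er--Wold reduction) is accurate and is precisely why the multivariate form of the invariance principle is needed.

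Your alternative route through the symmetric Rademacher sequence and Theorem~\ref{PT-Rad} is a nice touch specific to this paper, but note that it is not quite self-contained: you would still need an invariance step (from $\mathbf{G}$ to $\mathbf{Y}$, or from $\mathbf{Y}$ to $\Xi$) to close the loop, so the dependence on \cite{MOO10,NPR_aop} is not eliminated. In fact the paper takes the opposite stance: in the proposition following Theorem~\ref{Univer} it derives the Rademacher statement $(A_3)$ \emph{from} $(A_1)\Leftrightarrow(A_2)$, rather than the other way around. Thus your alternative amounts to reversing part of that deduction, which is legitimate but buys no simplification.
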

 Similar universality result for Poisson chaos was first established in \cite{PZheng14} and refined recently in \cite{DVZ17}. It was pointed out in \cite{PZheng14} and \cite{NPR_aop} that homogeneous sums inside the  Rademacher chaos are not universal with respect to normal approximation and a counterexample is available \emph{e.g.} in \cite[Proposition 1.7]{PZheng14}:
 
\paragraph{A Counterexample:}  Let $\mathbf{Y}$ be a sequence of \emph{i.i.d.} random variables with $\P(Y_1 = 1) = \P(Y_1 = - 1) = 1/2$ (that is, in the symmetric setting).
  Fix $q\geq 2$ and for each  $N \geq q$, we set 
 \begin{align*}
 f_N(i_1, \ldots, i_q)  = \begin{cases}
 \dfrac{1}{q! \sqrt{N-q+1}} \,, \quad\text{if $\{ i_1, \ldots, i_q\} = \{ 1, 2, \ldots, q-1, s\}$ for $q\leq s \leq N$;} \\
\qquad 0 \,, \qquad\qquad\text{otherwise.} 
 \end{cases}
 \end{align*}
 Then in the symmetric case,  
 $$Q_q(f_N;\mathbf{Y}) = Y_1Y_2\cdots Y_{q-1} \sum_{i=q}^N \frac{Y_i}{\sqrt{N-q+1}}   $$
  converges in law to the standard Gaussian, while if $\mathbf{G}$  is a sequence of i.i.d. standard Gaussians, then for every $N\geq 2$, $Q_q(f_N;\mathbf{G})\overset{law}{=} G_1G_2\cdots G_q$  fails to be Gaussian.   It is easy to check that the maximal influence $\mathcal{M}(f_N)$ of the kernel $f_N$ is equal to $1/(qq!)$ for every $N\geq 2$, which is consistent with  de Jong's theorem.

In the end of this section, we provide a (partially) universal result   for Rademacher chaos that complements \cite{DVZ17, NPR_aop, PZheng14}.

\begin{prop} Let the assumptions in Theorem \ref{Univer} prevail. Then, the following  statement is equivalent to $(A_1)$ and $(A_2)$  in Theorem \ref{Univer}:
\begin{enumerate}
\item[$(A_3)$] in the symmetric case, as $n\to+\infty$,
$
\big(  Q_{q_1} (f_{1,n}; \mathbf{Y} ), \ldots, Q_{q_d} (f_{d,n}; \mathbf{Y} ) \big)^T
$
 converges in distribution to $\mathcal{N}(0, \Sigma)$, and $\mathcal{M}(f_{j,n})\to 0$ for each $j\in\{1, \ldots, d\}$.
\end{enumerate}

\end{prop}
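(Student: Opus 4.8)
The plan is to establish the equivalence by proving the two directions $(A_3) \Rightarrow (A_2)$ and $(A_1) \Rightarrow (A_3)$, which together with the already-stated equivalence $(A_1) \Leftrightarrow (A_2)$ of Theorem \ref{Univer} closes the loop. The direction $(A_1) \Rightarrow (A_3)$ is the substantive one and is where the bulk of the work lies. Assuming $(A_1)$, the invariance principle of Mossel--O'Donnell--Oleszkiewicz (or rather its consequence, the equivalence $(A_1)\Leftrightarrow(A_2)$) immediately gives that the Rademacher homogeneous sums converge in law to $\mathcal N(0,\Sigma)$, since the symmetric Rademacher sequence $\mathbf Y$ has unit variance and bounded third moments. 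So the only thing to prove in this direction is that $\mathcal M(f_{j,n}) \to 0$ for each $j$. Here I would invoke the standard fact (\cite[Lemma 2.4]{NPR-rad}) that in the Gaussian Wiener chaos, $\mathcal M(f_{j,n}) \le \| f_{j,n} \otimes_{q_j - 1} f_{j,n} \|_{\H^{\otimes 2}}$, and then use the fourth moment theorem \cite{FMT}: since $Q_{q_j}(f_{j,n};\mathbf G)$ converges in law to $\mathcal N(0,\Sigma_{j,j})$ by $(A_1)$ (projecting onto the $j$-th coordinate), the fourth moment converges to $3\Sigma_{j,j}^2$, which forces $\| f_{j,n}\otimes_r f_{j,n}\|_{\H^{\otimes \bullet}} \to 0$ for all $1 \le r \le q_j - 1$, in particular $\mathcal M(f_{j,n}) \to 0$. (One small subtlety: if $\Sigma_{j,j} = 0$ then the $j$-th component converges to the constant $0$, and since its variance $q_j!\|f_{j,n}\|^2$ converges to $0$ and $\mathcal M(f_{j,n}) \le \|f_{j,n}\|^2_{\H^{\otimes q_j}}$, the conclusion $\mathcal M(f_{j,n})\to 0$ still follows; I would handle this edge case separately.)

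For the direction $(A_3) \Rightarrow (A_2)$, the hypothesis $(A_3)$ supplies exactly the two ingredients needed to apply Theorem \ref{PT-Rad}: convergence of the covariance matrices (which is built into the blanket assumptions of Theorem \ref{Univer}, namely the stated limit defining $\Sigma_{k,l}$), the maximal-influence condition $\sum_j \mathcal M(f_{j,n}) \to 0$, and distributional convergence of the symmetric Rademacher homogeneous sums to $\mathcal N(0,\Sigma)$. However, Theorem \ref{PT-Rad} as stated takes the fourth-cumulant conditions $\E[(F^{(n)}_j)^4] \to 3\Sigma_{j,j}^2$ as hypotheses rather than deriving them; so I would first extract those from $(A_3)$. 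This is where the univariate fourth moment theorem of D\"obler--Krokowski (Theorem \ref{DK17-thm}, or its reproof in Section 3.1) enters: the convergence in law of $Q_{q_j}(f_{j,n};\mathbf Y)$ to $\mathcal N(0,\Sigma_{j,j})$ together with $\mathcal M(f_{j,n})\to 0$ and the variance convergence, combined with the boundedness of all moments of fixed-order Rademacher chaos (the weak hypercontractivity noted in Section 2, valid in the symmetric case), yields convergence of the fourth moments. Alternatively, one can observe that the bound in Theorem \ref{DK17-thm} is \emph{two-sided in spirit}: from the analysis in Section 3.1, one has inequalities relating $\E[F^4]-3$ to quantities that vanish under $(A_3)$. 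Once the fourth-moment conditions are in hand, Theorem \ref{PT-Rad} applies verbatim and gives convergence of the Rademacher vector to $\mathcal N(0,\Sigma)$; then $(A_2)$ follows from the equivalence $(A_2)\Leftrightarrow(A_3$-for-$\mathbf Y)$ being already part of $(A_1)\Leftrightarrow(A_2)$ — more precisely, having shown $(A_3)\Rightarrow$ (the symmetric Rademacher case of convergence) and wanting $(A_2)$, I would note the symmetric Rademacher case is a special instance of $(A_2)$, so I should instead route the implication as $(A_3)\Rightarrow(A_1)$ using the universality direction, or simply observe that $(A_3)$ trivially contains $(A_1)$... which it does not. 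The cleanest logical route is therefore: $(A_3) \Rightarrow$ [fourth moments converge] $\Rightarrow$ [by the Gaussian fourth moment theorem applied to the kernels, hence $(A_1)$], closing the cycle.

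Let me reorganize: the efficient proof is $(A_1)\Rightarrow(A_3)$ as above, and $(A_3)\Rightarrow(A_1)$. For the latter, from $(A_3)$ I extract the componentwise fourth-moment convergence of the \emph{Rademacher} homogeneous sums (via Theorem \ref{DK17-thm} plus $\mathcal M(f_{j,n})\to 0$ plus uniform integrability from hypercontractivity). Now here is the key point connecting back to the Gaussian side: for a kernel $f$ supported on off-diagonal indices, the fourth moment $\E[Q_q(f;\mathbf Y)^4]$ in the \emph{symmetric} Rademacher case and the fourth moment $\E[Q_q(f;\mathbf G)^4]$ in the Gaussian case differ only by a term controlled by contractions and by $\mathcal M(f)$ (this is visible from the explicit fourth-moment formula, e.g. the computation leading to \eqref{use1}, or from the general principle that $\E[F^4]-3\E[F^2]^2$ has the same leading structure up to maximal-influence corrections). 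Hence $\E[Q_{q_j}(f_{j,n};\mathbf G)^4] \to 3\Sigma_{j,j}^2$ as well, and by the fourth moment theorem \cite{FMT} the Gaussian homogeneous sums converge componentwise, whence by the Gaussian Peccati--Tudor theorem \cite{PTudor} the Gaussian vector converges to $\mathcal N(0,\Sigma)$, i.e. $(A_1)$ holds.

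The main obstacle I anticipate is the careful bookkeeping in relating the symmetric-Rademacher fourth moments to the Gaussian fourth moments: one must verify that the discrepancy between $\E[Q_q(f;\mathbf Y)^4]$ and $\E[Q_q(f;\mathbf G)^4]$ is genuinely $O(\text{contractions} + \mathcal M(f))$ using the multiplication formula \eqref{product} and Lemma \ref{tech1}, and in particular that no ``diagonal'' terms of order $1$ survive. The estimates \eqref{lem-2}--\eqref{lem-3} of Lemma \ref{tech1}, together with \eqref{use1}, are precisely designed to make this bookkeeping go through, so I expect the argument to be short modulo invoking those. A secondary, more cosmetic difficulty is organizing the logic so that no circularity creeps in through the invariance principle — for this I would rely only on the fourth moment theorem \cite{FMT} and the Gaussian Peccati--Tudor theorem \cite{PTudor} on the Gaussian side, and on Theorem \ref{DK17-thm} and Lemma \ref{tech1} on the Rademacher side, keeping Theorem \ref{Univer} itself in reserve only for the harmless implication $(A_1)\Leftrightarrow(A_2)$.
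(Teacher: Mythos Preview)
Your proposal is correct and, once the meandering is stripped away, matches the paper's proof. Both directions use exactly the ingredients you name: for $(A_1)\Rightarrow(A_3)$, the equivalence $(A_1)\Leftrightarrow(A_2)$ gives the Rademacher convergence, and the Gaussian fourth moment theorem together with the bound $\mathcal{M}(f)\leq\|f\otimes_{d-1}f\|_{\H^{\otimes 2}}$ from \cite[Lemma~2.4]{NPR-rad} gives $\mathcal{M}(f_{j,n})\to 0$; for $(A_3)\Rightarrow(A_1)$, hypercontractivity in the symmetric case gives uniform integrability of fourth powers, hence $\E[Q_{q_j}(f_{j,n};\mathbf{Y})^4]\to 3\Sigma_{j,j}^2$, then Lemma~\ref{tech1} forces the contractions to vanish, and the Gaussian Peccati--Tudor theorem \cite{PTudor} concludes.

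The one place where you make life harder than necessary is what you flag as your ``main obstacle'': comparing $\E[Q_q(f;\mathbf{Y})^4]$ to $\E[Q_q(f;\mathbf{G})^4]$. The paper bypasses this entirely. Inequality \eqref{lem-2} already reads
\[
p!^2\sum_{r=1}^{p-1}\binom{p}{r}^2\|f\otimes_r f\|_{\H^{\otimes 2p-2r}}^2 \;\leq\; \E[F^4]-3\E[F^2]^2 + (2p)!\,\|f\widetilde{\otimes}f\,\mathbf{1}_{\Laplace_{2p}^c}\|_{\H^{\otimes 2p}}^2,
\]
where $F=Q_p(f;\mathbf{Y})$ is the \emph{Rademacher} multiple integral and the last term is controlled by $\mathcal{M}(f)$ via \eqref{lem-3}. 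So the Rademacher fourth cumulant together with $\mathcal{M}(f_{j,n})\to 0$ directly forces the contractions to vanish, and you can pass straight to the Gaussian Peccati--Tudor theorem without ever computing or bounding the Gaussian fourth moment. Your invocation of Theorem~\ref{DK17-thm} in this step is likewise superfluous: hypercontractivity alone yields the convergence of fourth moments from convergence in law.
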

 \begin{proof}  Suppose $(A_1)$ holds true, then $
\big(  Q_{q_1} (f_{1,n}; \mathbf{Y} ), \ldots, Q_{q_d} (f_{d,n}; \mathbf{Y} ) \big)^T
$
 converges in distribution to $\mathcal{N}(0, \Sigma)$ by ``$(A_2)\Leftrightarrow (A_1)$"; and by the fourth moment theorem on a Gaussian space \cite{FMT}, $(A_1)$ implies that $\| f_{j,n}\otimes_{q_j-1} f_{j,n} \| _{\H^{\otimes 2}}\to 0$, as $n\to+\infty$. Recall from \cite[Lemma 2.4]{NPR-rad} that   $\mathcal{M}(f) \leq \| f\otimes_{d-1} f \| _{\H^{\otimes 2}}$ for each $f\in\H^{\odot d}_0$, therefore $\mathcal{M}(f_{j,n})\to 0$ for each $j\in\{1, \ldots, d\}$. This proves the implication ``$(A_1)\Rightarrow (A_3)$".
 
 It remains to show ``$(A_3)\Rightarrow (A_1)$". Now we  assume that $(A_3)$ is true, then by a weak form of the \emph{hypercontractivity} property (see Section 2), we have 
 $
\lim_{n\to+\infty} \E\big[ Q_{q_j} (f_{n,j}; \mathbf{Y} )^4  \big] = 3 \Sigma_{j,j}^2 
 $  for each $j=1,\ldots, d$.
 It follows from Lemma \ref{tech1} that $\big\| f_{j,n}\otimes_r f_{j,n} \big\| _{\H^{\otimes 2q_j - 2r}} \to 0$ for each $r = 1, \ldots, q_j - 1$, and any $j=1, \ldots, d$. Hence, $(A_1)$ follows immediately from the Peccati-Tudor theorem \cite{PTudor}. This concludes our proof. \qedhere

 \end{proof}

\end{document}